\newcommand{\be}{\begin{equation}}
\newcommand{\ee}{\end{equation}}
\newcommand{\ba}{\begin{eqnarray}}
\newcommand{\ea}{\end{eqnarray}}
\newcommand\nn{\nonumber}
\newcommand{\cA}{{\cal A}}
\newcommand{\cB}{{\cal B}}
\newcommand{\cO}{{\cal O}}
\newcommand{\cM}{{\cal M}}
\newcommand{\us}{\rule{0.2cm}{0.4pt}}
\def\a{\alpha}
\def\e{\epsilon}
\def\la{\lambda}
\theoremstyle{definition}
\newtheorem{defn}{Definition}[section]
\theoremstyle{theorem}
\newtheorem{thm}{Theorem}[section]
\newtheorem{propo}{Proposition}[section]
\begin{document}

\vskip 12mm

\begin{center} 
{\Large \bf 
Motzkin path on RNA abstract shapes}
\vskip 10mm
{ \large  Sang Kwan Choi\footnote{email: hermit1231@sogang.ac.kr; skchoi@scu.edu.cn} }\\
\vskip 5mm
{\it   Center for Theoretical Physics, College of Physical Science and Technology\\
Sichuan University, Chengdu 610064, China} 
\end{center}

\vskip 5mm

\begin{abstract}
We consider a certain abstract
of RNA secondary structures,
which is closely related to RNA shapes.
The generating function 
counting the number of the abstract structures
is obtained by means of Narayana numbers
and 2-Motzkin paths, through which
we provide an identity 
related to Narayana numbers and
Motzkin polynomials.
Furthermore, 
we show that a combinatorial interpretation 
on 2-Motzkin paths 
leads to the correspondence
between 1-Motkzin paths and RNA shapes,
which facilitates probing
further classifications 
or abstractions of the shapes.
In this paper,
we classify the shapes 
with respect to the number of components
and calculate their asymptotic distributions.
\vskip 2mm
\noindent
{\it {\bf Keywords: }RNA shape; Motzkin path; Narayana number; Asymptotics}
\end{abstract}

\vskip 5mm

\setcounter{footnote}{0}

\section{Introduction} 

Ribonucleic acid (RNA) is a single stranded molecule
with a backbone of nucleotides,
each of which has one of the four bases,
adenine (A), cytosine (C), guanine (G) and uracil (U).
Base pairs are formed intra-molecularly 
between A-U, G-C or G-U, 
leading the sequence of bases to form helical regions. 
The primary structure of an RNA is merely the sequence of bases
and its three-dimensional conformation by base pairs
is called the tertiary structure.
As an intermediate structure between the primary and the tertiary,
the secondary structure is a planar structure
allowing only nested base pairs.
This is easy to see in its diagrammatic representation, see Fig.\ref{fig:fig1}.
A sequence of $n$ bases is that of labeled vertices $(1,2,\cdots, n)$
in a horizontal line 
and base pairs are drawn as arcs in the upper half-plane.
The condition of nested base pairs means
non-crossing arcs: for two arcs (i,j) and (k,l)
where $i < j$, $k<l$ and $i<k$, either $i<j<k<l$ or $i<k<l<j$.
Since the functional role of an RNA depends mainly
on its 3D conformation,
prediction of RNA folding
from the primary structure
has long been an important problem in molecular biology. 
The most common approach for the prediction
is free energy minimization
and many algorithms to compute the structures
with minimum free energy
have been developed (see for instance,
\cite{NS_1980, ZS_1981, ZS_1984, SSR_1997}).

\begin{figure}[!tpb]
\centering
\includegraphics[width=0.8\textwidth]{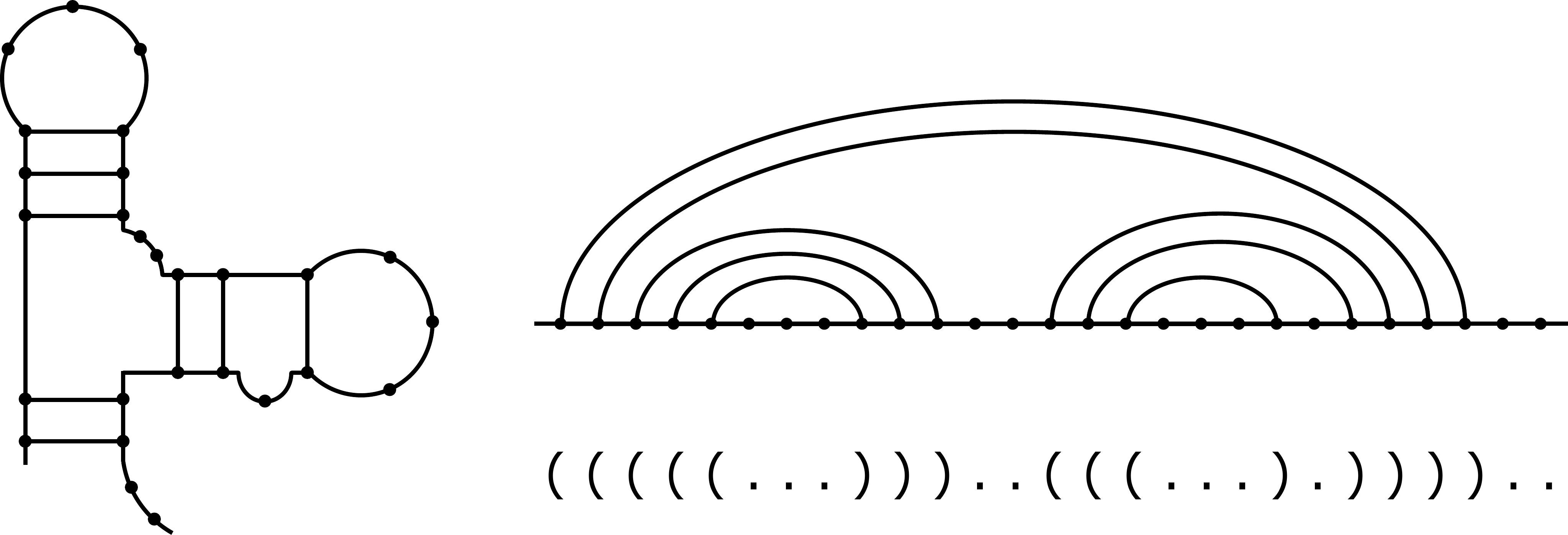}
\caption{\label{fig:fig1}
Representations of secondary structures.
The RNA structure on the left hand side
is represented as the diagram (top right)
and the dot-bracket string (bottom right).
}
\end{figure}

On the other hand,
RNA structures are often considered
as combinatorial objects
in terms of representations 
such as strings over finite alphabets,
linear trees or the diagrams.
Combinatorial approaches
enumerate the number of possible
structures under various kinds of constraints
and observe its statistics
to compare with experimental findings 
\cite{SW_1979, HSS_1998, SW_1994, BLR_2016, CRU}.
They also provide classifications 
of structures to advance prediction algorithms
\cite{W_1978, GVR_2004, OZ_2002, RHAPSN_2011}.

In this paper, 
we consider a certain abstract 
of secondary structures 
under a combinatorial point of view
regardless of primary structures.
The abstract structure is, in fact, 
closely related 
to so-called RNA shapes 
\cite{GVR_2004, JRG_2008, NS_2009}, see section \ref{sec:shape}.
Although we will consider it 
apart from prediction algorithms,
let us briefly review  the
background to RNA shapes
in the context of prediction problem.
In free energy minimization scheme,
the lowest free energy structures
are not necessarily native structures.
One needs to search 
suboptimal foldings in a certain energy bandwidth
and, in general, obtains a huge set of suboptimal foldings.
RNA shapes classify
the foldings according to their structural similarities
and provide so-called shape representatives
such that native structures
can be found among those shape representatives.
Consequently, 
it can greatly narrow down the huge set of suboptimal foldings
to probe in order to find native structures.

In the following preliminary,
we introduce our combinatorial object,
what we call island diagram
and present basic definitions
needed to describe the diagram.
In section \ref{sec:gf}, we find the generating function 
counting the number of 
island diagrams in two different ways
and through which, one may see the relation between 
Narayana numbers and 2-Motzkin paths.
In particular, we find a combinatorial identity,
see equation \eqref{ouriden},
which generalizes the following two identities 
that Coker provided \cite{Coker} 
(see also \cite{CYY_2008} for a combinatorial interpretation):
\begin{gather}
\sum_{k=1}^{n}\frac1n \binom{n}{k} \binom{n}{k-1} x^{k-1}
=\sum_{k=0}^{\lfloor \frac{n-1}2 \rfloor}
C_k \binom{n-1}{2k} x^k (1+x)^{n-2k-1}
\label{coker1}
\\
\sum_{k=1}^n \frac1n \binom{n}{k} \binom{n}{k-1} 
x^{2(k-1)}(1+x)^{2(n-k)}
=\sum_{k=1}^n C_k \binom{n-1}{k-1}x^{k-1} (1+x)^{k-1}
\label{coker2}
\end{gather}
where $C_k$ is the Catalan number defined by
$C_k=\frac1{k+1}\binom{2k}{k}$ for $k \geq 0$.
A combinatorial interpretation on 2-Motzkin paths
is given in accordance with island diagrams.
The interpretation implies the bijection between
$\pi$-shapes and 1-Motzkin paths
which was shown in \cite{DS_1977, LPC_2008}.
The refined bijection map facilitates
exploring further
classifications or abstractions of $\pi$-shapes.
As one immediate attempt, in section \ref{sec:shape},
we classify $\pi$-shapes according to the number of components,
of which the generating function is calculated.
Asymptotic distributions of the number of $\pi$-shapes
are presented as a function of the number of components.
Section 1 and Section 2 overlap in part with \cite{CRU_2019}.

\subsubsection*{Preliminary}
A formal definition of secondary structures is given as follows:
\begin{defn}[Waterman \cite{W_1978}]
A secondary structure is a vertex-labeled graph on $n$ vertices
with an adjacency matrix $A=(a_{ij})$ 
(whose element $a_{ij}=1$ if $i$ and $j$ are adjacent,
and $a_{ij}=0$ otherwise with $a_{ii}=0$)
fulfilling the following three conditions:
\\
1. $a_{i,i+1}=1$ for $1 \leq i \leq n-1$.
\\
2. For each fixed $i$, there is at most one $a_{ij}=1$
where $j \neq i \pm 1$
\\
3. If $a_{ij}=a_{kl}=1$, where $i <k<j$, then $i \leq l \leq j$.
\label{waterman}
\end{defn}
\noindent An edge $(i,j)$ with $|i-j| \neq 1$ is said to be a base pair
and a vertex $i$ connected only to $i-1$ and $i+1$ is called unpaired.
We will call an edge $(i, i+1)$, $1 \leq i \leq n-1$, a backbone edge.
Note that a base pair between adjacent two vertices
is not allowed by definition
and the second condition implies
non-existence of base triples.

There are many other representations of secondary structures
than the diagrammatic representation.
In this paper, we often use the so-called dot-bracket representation,
see figure \ref{fig:fig1}.
A secondary structure can be represented as
a string $\mathbf{S}$ over the alphabet set $\{(,\, ),\, .\}$
by the following rules \cite{HSS_1998}:
\\
1. If vertex $i$ is unpaired then $\mathbf S_i=$``$.$''.
\\
2. If $(i,j)$ is a base pair and $i<j$ then $\mathbf S_i=$ ``('' 
and $\mathbf S_j=$``)''.

In the following, we present the basic definitions 
of structure elements needed for our investigations.
\begin{defn} 
A secondary structure on $(1, 2, \cdots, n)$ 
consists of the following structure elements (cf. Fig.\ref{fig:fig2}).
By a base pair $(i,j)$, we always assume $i<j$.
\\
1. The sequence of unpaired vertices $(i+1, i+2, \cdots, j-1)$
is a {\it hairpin} if $(i,j)$ is a base pair.
The pair $(i,j)$ is said to be the 
{\it foundation of the hairpin}.
\\
2. The sequence of unpaired vertices $(i+1, i+2, \cdots, j-1)$
is a {\it bulge} if either $(k,j)$, $(k+1,i)$ 
or $(i,k+1)$, $(j,k)$ are base pairs. 
\\
3. A {\it tail} is a sequence of unpaired vertices
$(1,2, \cdots, i-1)$, resp. $(j+1, j+2, \cdots, n)$
such that $i$, resp. $j$ is paired.
\\
4. An {\it interior loop} is two sequences of unpaired vertices
$(i+1, i+2, \cdots, j-1)$ and $(k+1, k+2, \cdots, l-1)$
such that $(i,l)$ and $(j,k)$ are pairs, where $i<j<k<l$.
\\
5. For any $k \geq 3$ and $0 \leq l, m \leq k$ with $l+m=k$,  
a {\it multi loop}
is $l$ sequences of unpaired vertices
and $m$ empty sequences
$(i_1+1, \cdots, j_1-1), (i_2+1, \cdots, j_2-1), 
\cdots, (i_k+1, \cdots, j_k-1)$
such that $(i_1, j_k), (j_1, i_2), \cdots, (j_{k-1}, i_k)$
are base pairs.
A sequence $(i+1, \cdots, j-1)$ 
is an empty sequence if $i+1 = j$.
\\
6. For any $k \geq 2$ and $0 \leq l, m \leq k$ with $l+m=k$,  
an {\it external loop}
is $l$ sequences of unpaired vertices
and $m$ empty sequences
$(1, \cdots, j_1-1), (i_2+1, \cdots, j_2-1), 
\cdots, (i_k+1, \cdots, n)$
such that $(j_1, i_2), \cdots, (j_{k-1}, i_k)$
are base pairs.
The number of {\it components}
is $k-1$.
\\
7. A {\it stack (or stem)} consists of uninterrupted
base pairs $(i+1,j-1), (i+2,j-2), \cdots, (i+k, j-k)$
such that neither $(i,j)$ nor $(i+k+1, j-k-1)$
is a base pair. Here the {\it length} of the stack is $k$.
\label{elem}
\end{defn}

\begin{figure}[hb]
\centering
\includegraphics[width=0.9\textwidth]{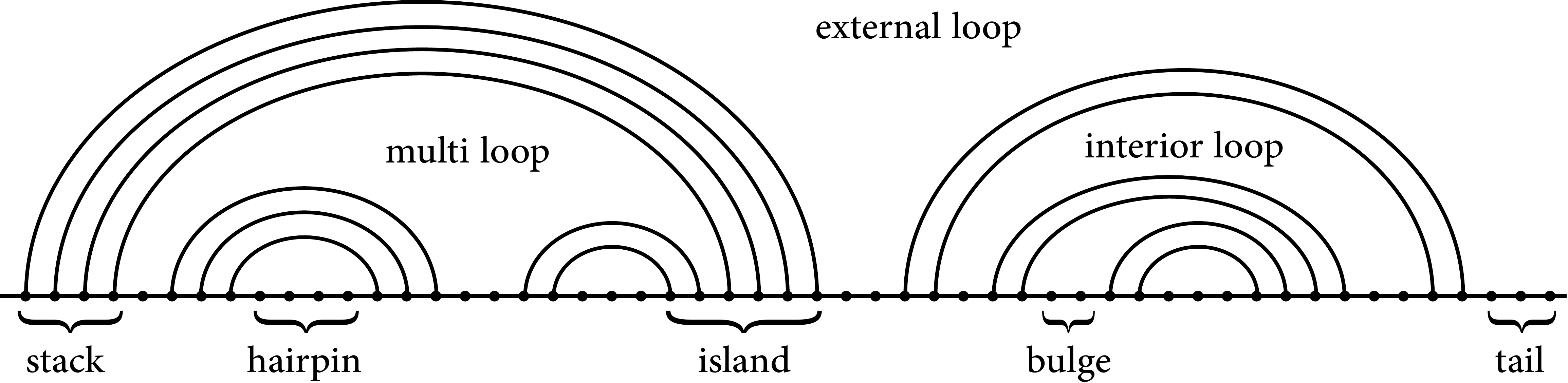}
\caption{\label{fig:fig2}
Structure elements of secondary structures.
}
\end{figure}

\noindent Note that, while other structure elements
consist of at least one vertex,
a multiloop and an external loop 
do not necessarily have a vertex.
In the diagrammatic representation, 
a multiloop is a structure bounded
by three or more base pairs and backbone edges.

In addition to the structure elements,
we define an auxiliary element indicating
a sequence of maximally consecutive paired vertices:
\begin{defn}
An {\it island} is a sequence of paired vertices
$(i,i+1, \cdots, j)$ such that
\\
1.  $i-1$ and $j+1$ are both unpaired, where $1<i \leq j<n$.
\\
2. $j+1$ is unpaired, where $i=1$ and $1 \leq j<n$.
\\
3. $i-1$ is unpaired, where $1<i \leq n$ and $j=n$.
\end{defn}

Now we introduce the abstract structures
to consider in the next section.
From here on, we will call
the structures {\it island diagrams}
for convenience.
An island diagram (cf. Fig.\ref{fig:island}) 
is obtained from secondary structures by
\\
1. Removing tails.
\\
2. Representing a sequence of consecutive unpaired vertices
between two islands by a single blank.
\\
Accordingly, we retain unpaired regions except for tails
but do not account for the number of unpaired vertices.
In terms of the dot-bracket representation,
we shall use the underscore ``\us'' for the blank:
for example, the island diagram ``$((\us)\us)$'' abstracts 
the secondary structure ``$((...)....)$''.
Since the abstraction preserves
all the structure elements (except for tails) 
in the definition \ref{elem},
we will use them to describe island diagrams
in such a way that, for instance, the blank is a hairpin
if its left and right vertices are paired to each other.

\begin{figure}[hb]
\centering
\includegraphics[width=0.8\textwidth]{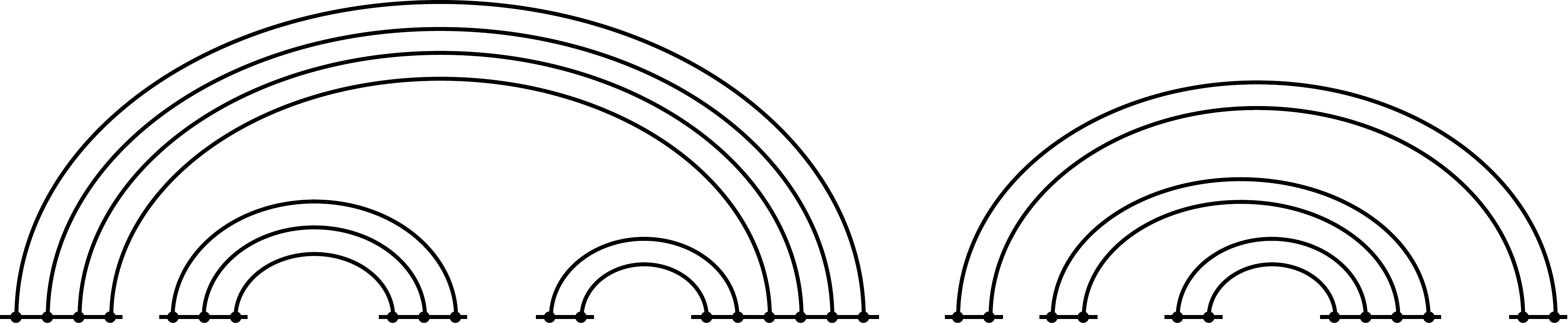}
\caption{\label{fig:island}
An example of island diagrams.
This island diagram is the abstract structure
of the secondary structure given in the figure \ref{fig:fig2}.
}
\end{figure}

\section{Generating function for island diagram}\label{sec:gf}
We enumerate the number of island diagrams
$g(h,I,\ell)$,
filtered by the number of hairpins($h$), islands($I$) and basepairs($\ell$).
Let $G(x,y,z)=\sum_{h,I,\ell} g(h,I,\ell) x^h\, y^I \, z^\ell$
denote the corresponding generating function.
We obtain the generating function in two different ways,
by means of Narayana numbers and 2-Motzkin paths.
In particular, we provide a bijection map
between 2-Motzkin paths 
and sequences of matching brackets.

\subsection{Narayana number}
The easiest way to obtain the generating function $G(x,y,z)$
is to use a combinatorial interpretation of the Narayana numbers,
which are defined by
\be
N(n,k)=\frac1{n}\binom{n}{k}\binom{n}{k-1}\,, ~~~
1 \leq k \leq n  \,.
\ee
The Narayana number $N(n,k)$ counts
the number of ways arranging $n$ pairs of brackets
to be correctly matched and contain $k$ pairs as ``()''.
For instance, the bracket representations for $N(4,2)=6$
are given as follows:
\begin{center}
(()(())) ~~~~ ((()())) ~~~~ ((())()) ~~~~
()((())) ~~~~ (())(()) ~~~~  ((()))()
\end{center}
It is easy to recover island diagrams from this representation.
 
\begin{propo}
The generating function has the form
\be
G(x,y,z)= \sum_{\ell, h} N(\ell,h)\, x^h\, y^{h+1} \, 
(1+y)^{2\ell-1-h} \,z^{\ell} \,.
\label{GFN}
\ee
Its closed form is
\be
G(x,y,z)=\left(\frac{y}{1+y} \right)
\frac{1-A(1+B)-\sqrt{1-2A(1+B)+A^2 (1-B)^2}}{2A}
\label{cGFN}
\ee
where $A=z(1+y)^2$ and $B=x \, y /(1+y)$.
\end{propo}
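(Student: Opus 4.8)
The plan is to establish the series form \eqref{GFN} by a direct combinatorial reading of island diagrams off matched bracket words, and then to obtain the closed form \eqref{cGFN} by collapsing that series to the bivariate Narayana generating function. For \eqref{GFN}, I would fix the number of base pairs $\ell$ and start from a correctly matched word of $2\ell$ brackets. By the stated interpretation of the Narayana numbers, there are exactly $N(\ell,h)$ such words with precisely $h$ occurrences of the pattern ``()''. Each ``()'' is an innermost pair, and since a base pair between adjacent vertices is forbidden (Definition \ref{waterman}), it must enclose a blank, i.e.\ become ``(\us)''; these are exactly the $h$ hairpins, so they are marked by $x^h$ and force a blank in $h$ of the inter-bracket gaps. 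The remaining gaps are of type ``(('', ``)('' or ``))'', and into each of them a blank may independently be inserted or omitted, according to whether an unpaired region sits there; tails are removed, so no blank is placed before the first or after the last bracket, leaving exactly $2\ell-1-h$ such optional positions.

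It then remains only to count islands, which are the maximal runs of consecutive brackets: if blanks occupy $m$ of the gaps, the word breaks into $m+1$ islands. With the $h$ hairpin blanks always present and $j$ of the remaining $2\ell-1-h$ gaps chosen, one gets $I=h+j+1$, contributing $\binom{2\ell-1-h}{j}\,y^{h+j+1}$; summing over $j$ gives $y^{h+1}(1+y)^{2\ell-1-h}$, and assembling the factors $N(\ell,h)\,x^h\,z^\ell$ and summing over $\ell,h$ reproduces \eqref{GFN}. I would verify this hairpin/island bookkeeping on $\ell=1,2$ before trusting it.

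For the closed form I would use the substitution $A=z(1+y)^2$ and $B=xy/(1+y)$ from the statement, under which the generic summand factors as $x^h y^{h+1}(1+y)^{2\ell-1-h}z^\ell=\tfrac{y}{1+y}\,A^{\ell}B^{h}$, so that \eqref{GFN} becomes $G=\tfrac{y}{1+y}\,F(A,B)$ with $F(A,B)=\sum_{\ell\ge 1}\sum_h N(\ell,h)A^{\ell}B^{h}$ the bivariate Narayana generating function. To evaluate $F$ I would use the first-return decomposition of Dyck words, $w=(P)Q$, noting that the wrapping pair produces a new ``()'' exactly when $P$ is empty while the junction ``)('' never does; this yields the functional equation $F=A(F+B)(F+1)$, equivalently the quadratic $AF^2-(1-A(1+B))F+AB=0$. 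Selecting the branch with $F(0)=0$ gives $F=\dfrac{1-A(1+B)-\sqrt{1-2A(1+B)+A^2(1-B)^2}}{2A}$, and multiplying by $y/(1+y)$ delivers \eqref{cGFN}.

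I expect the main obstacle to be the combinatorial bookkeeping of the first step rather than the algebra of the second: one must argue that inserting blanks into the non-``()'' gaps is a bijection onto all island diagrams carrying the prescribed statistics, getting the forced-versus-optional dichotomy of gaps exactly right, correctly excluding the boundary (tail) blanks, and confirming that the island count is one more than the number of occupied gaps. Once that correspondence is pinned down the reduction to $F(A,B)$ is immediate, and the closed form is a routine quadratic solution with the physical branch selected by the initial condition $G=O(z)$.
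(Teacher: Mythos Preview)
Your argument is correct and follows essentially the same route as the paper: read island diagrams off matched bracket words via the Narayana interpretation, force blanks inside each ``()'' and optionally in the remaining $2\ell-1-h$ gaps, then pass to the closed form through the bivariate Narayana generating function. The only difference is that the paper simply cites the closed form of the Narayana generating function as well known, whereas you derive it via the first-return decomposition $F=A(F+B)(F+1)$; your island-counting justification ($I=\text{blanks}+1$) is in fact more explicit than the paper's terse passage from ``$h$ underscores'' to the factor $y^{h+1}$.
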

 
\begin{proof}
One may immediately associate bracket representations
of the Narayana numbers with island diagrams.
Without regard to underscores,
the pair of brackets is associated with the basepair
and the sub-pattern ``()'' corresponds to the
foundation of the hairpin. 
It clearly explains the factor $N(\ell,h) x^h z^\ell$.
Now we consider the insertions of underscores 
to recover the string representation of island diagrams.
Recall that, in secondary structures, 
a hairpin consists of at least one unpaired vertices.
Therefore, the foundation of the hairpin
 ``()'' must contain a underscore ``(\us)''.
The number $h$ of underscores are so inserted
that we have the factor $y^{h+1}$.
After the insertion of hairpin underscores, 
there are $(2\ell-1-h)$ places 
left to possibly insert underscores.
The numbers of all possible insertions 
are summarized by the factor $(1+y)^{2\ell-1-h}$.
Therefore, one obtains the form \eqref{GFN}.
The generating function of 
the Narayana numbers is well-known (see for instance \cite{BH_2011})
so that one writes the closed form \eqref{cGFN}.
\end{proof}

\subsection{2-Motzkin path}
The generating function $G(x,y,z)$ can also be written 
in terms of Motzkin polynomial coefficients.
The Motzkin numbers $M_n$ 
and the Motzkin polynomial coefficients $M(n,k)$ 
are defined as
\be
M_n=\sum_{k=0}^{\lfloor n/2 \rfloor} M(n,k)
~~~ \mathrm{where} ~~~
M(n,k)=\binom{n}{2k} C_k \,.
\label{motzkindef}
\ee
Let us consider the combinatorial identity
in the following theorem.
It is easy to prove using the generating function
of the Motzkin polynomials:
\be
m(v,w):=\sum_{n \geq 0}\sum_{k=0}^{\lfloor n/2 \rfloor}
M(n,k) w^{n} v^k=\frac{1-w-\sqrt{(1-w)^2-4 v  w^2}}{2v  w^2}\,.
\label{gfmotzkin}
\ee 
\begin{thm} For any integer $\ell \geq 1$, there holds
\be
\begin{split}
&\frac{y}{1+y}\, \sum_{h=1}^{\ell} N(\ell,h) \, (x\,y)^h \,(1+y)^{2\ell-h}
\\
&= x \, y^2 \sum_{p=0}^{\lfloor \frac{\ell-1}2 \rfloor}
M(\ell-1,p)\, \big(x \, y \,(1+y)^3 \big)^p  \,
\big((1+y)(1+y+x \,y)\big)^{\ell-2p-1} \,.
\end{split}
\label{ouriden}
\ee
\end{thm}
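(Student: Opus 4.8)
The plan is to introduce the auxiliary variable $z$ of the Proposition as a bookkeeping marker for $\ell$, multiply both sides of \eqref{ouriden} by $z^\ell$, and sum over $\ell \geq 1$. Since for each $\ell$ both sides are polynomials in $x$ and $y$, the claimed per-$\ell$ identity is equivalent to the equality of the two resulting generating functions in $z$, from which I would read off the coefficient of $z^\ell$ at the end. The left-hand side is arranged so that this is immediate: comparing with \eqref{GFN}, the $\ell$-th summand $\frac{y}{1+y}\sum_h N(\ell,h)(xy)^h(1+y)^{2\ell-h}$ is exactly $[z^\ell]\,G(x,y,z)$, so the left generating function is simply $G(x,y,z)$, for which the Proposition already supplies the closed form \eqref{cGFN}.

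The substance is on the right. First I would reindex with $n=\ell-1$ and factor out the constants $V_0 := x\,y\,(1+y)^3$ and $W := (1+y)(1+y+x\,y)$, so that the inner sum becomes $\sum_p M(n,p)\,V_0^{\,p}\,W^{\,n-2p}$. The key manipulation is to homogenize this as $\sum_p M(n,p)V_0^{p}W^{n-2p}=W^{n}\sum_p M(n,p)(V_0/W^2)^{p}$; after multiplying by $z^{n+1}$ and summing over $n$, the double sum is precisely the Motzkin generating function \eqref{gfmotzkin} evaluated at $w=zW$ and $v=V_0/W^2$. Thus the right generating function equals $x\,y^2\,z\;m(V_0/W^2,\,zW)$, and substituting the closed form of $m$ — noting the convenient collapse $v\,w^2=V_0\,z^2$ — turns it into a single radical expression with prefactor $\dfrac{x\,y^2}{2V_0 z}=\dfrac{y}{2z(1+y)^3}$.

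It then remains to match this against \eqref{cGFN}. With $A=z(1+y)^2$ and $B=x\,y/(1+y)$ as in the Proposition, the prefactor of $G$ is $\frac{y}{1+y}\cdot\frac{1}{2A}=\frac{y}{2z(1+y)^3}$, so the prefactors already agree and the problem reduces to verifying that the two radicals coincide. This follows from two short algebraic identities: $zW=A(1+B)$, which matches the linear-in-$A$ terms under the roots, and $A^2B=z^2V_0$, which together with $(1+B)^2-(1-B)^2=4B$ converts $(1-zW)^2-4V_0z^2$ into $1-2A(1+B)+A^2(1-B)^2$. Equating coefficients of $z^\ell$ then yields \eqref{ouriden}.

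I expect the main obstacle to be the homogenization-and-reindexing step that recasts the right-hand side as a specialization of $m(v,w)$: getting the substitution $w=zW$, $v=V_0/W^2$ correct and tracking the $\ell\mapsto n+1$ shift is what makes the two closed forms comparable. The final radical matching is then purely mechanical, resting on the single identity $A^2B=z^2V_0$; it is worth a quick low-$\ell$ sanity check (e.g.\ $\ell=1,2,3$) to confirm the bookkeeping before committing to the general argument.
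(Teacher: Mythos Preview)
Your proposal is correct and follows essentially the same route as the paper: recognize the left side as $[z^\ell]G(x,y,z)$ via \eqref{GFN}, sum both sides against $z^\ell$, and identify the right side as a specialization of the Motzkin generating function \eqref{gfmotzkin} that collapses to the closed form \eqref{cGFN}. The paper's own proof merely asserts ``one can check that the right hand side is indeed the generating function $G(x,y,z)$''; you have supplied exactly that check, including the substitutions $w=zW$, $v=V_0/W^2$ and the algebraic verification $A^2B=z^2V_0$, all of which are correct.
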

\begin{proof}
The left hand side
is $[z^\ell] G(x,y,z)$ given in \eqref{GFN}.
Multiplying $z^\ell$ and taking the summation over $\ell$ at each side,
one can check that the right hand side
is indeed the generating function $G(x,y,z)$.
\end{proof}

\noindent Note that the identity \eqref{ouriden} reproduces
the Coker's two identities.
When we substitute $x/y$ for $x$ and then put $y=0$,
we get the identity \eqref{coker1}.
Furthermore, the substitution $x \to y/(1+y)$
leads to the identity \eqref{coker2}.\footnote{
In order to deduce the identity,
one may need the Touchard's identity \cite{Touchard}:
$C_n=\sum_k C_k \binom{n-1}{2k} 2^{n-2k-1}$,
which can also be derived from \eqref{coker1}
when $x=1$.}

We will investigate how the right hand side in \eqref{ouriden}
represents island diagrams.
In order to do that, 
we need a combinatorial interpretation
of 2-Motkzin paths.
Let us first introduce the Motzkin paths,
that can also be called 1-Motkzin paths.
A Motzkin path of size $n$ is a lattice path 
starting at $(0,0)$ and ending at $(n,0)$
in the integer plane $\mathbb Z \times \mathbb Z$,
which satisfies two conditions: (i) It never passes below the $x$-axis. 
(ii) Its allowed steps are the up step $(1,1)$,
the down step $(1,-1)$ and the horizontal step $(1,0)$.
We denote by $U$, $D$ and $H$
an up step, a down step and a horizontal step, respectively. 
The Motzkin polynomial coefficient $M(n,k)$
is the number of Motzkin paths of size $n$ with $k$ up steps.
Since the Motkzin number $M_n$
is given by the sum of $M(n,k)$ over 
the number of up steps,
$M_n$ is the number of Motzkin paths of size $n$.
See for instance, the following figure depicting a Motzkin path of $M(7,2)$:

\begin{figure}[ht]
\centering
\includegraphics[width=0.4\textwidth]{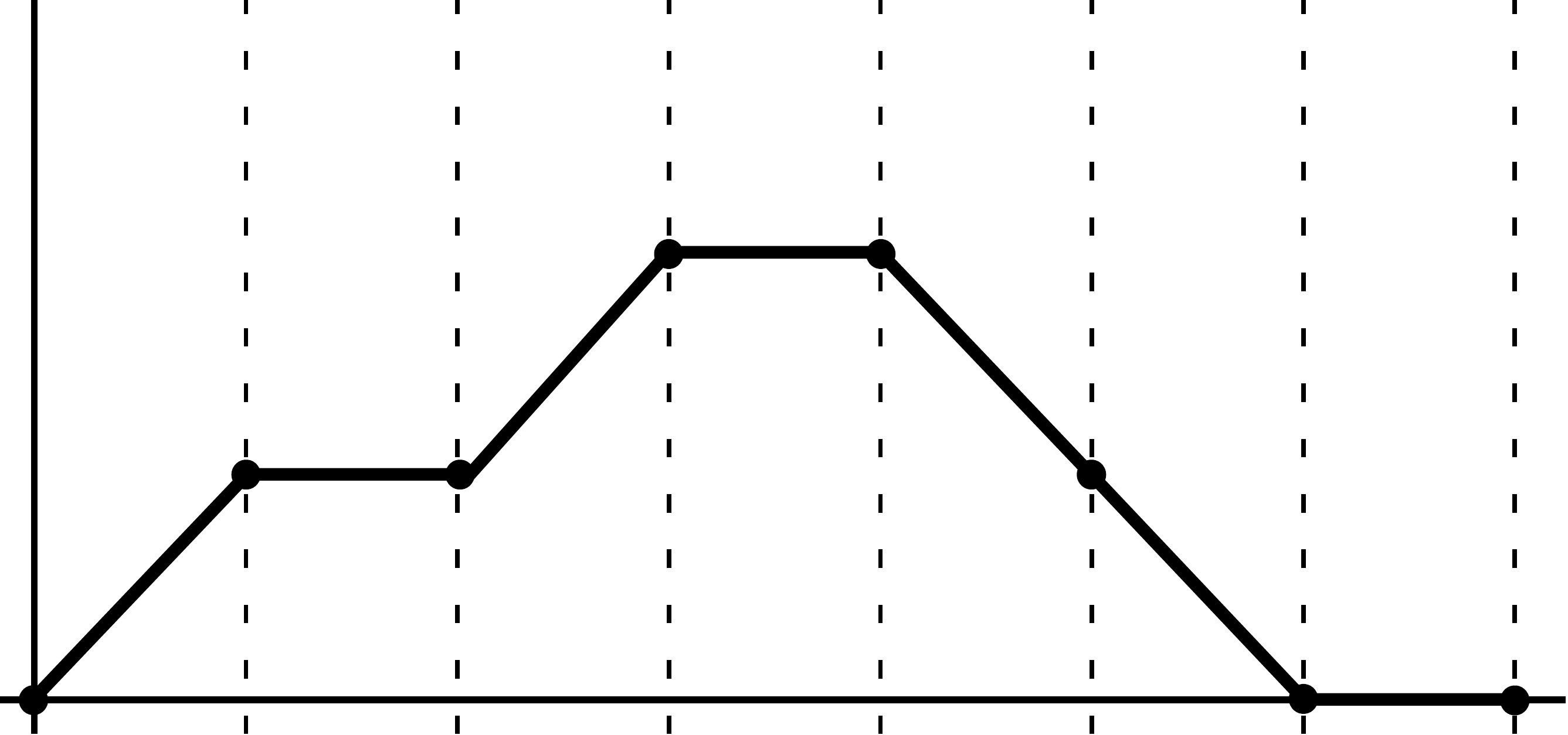}
\caption{Motzkin path of $UHUHDDH$.}
\end{figure}  

On the other hand, 2-Motzkin paths
allow two kinds of horizontal steps,
which often distinguish one from another by a color,
let us say, $R$ and $B$
denoting a red and a blue step, respectively.
We provide a bijection map between 2-Motzkin paths
and strings of matching brackets.\footnote{
Sequences of matching brackets 
are only Dyck paths. A bijection map
between Dyck paths and 2-Motzkin paths was introduced
by Delest and Viennot \cite{DV_1984}.
But here we present a different way
of mapping than the well-known one.}
Suppose we have a 2-Motzkin path of size $n$
given by a string $q_1 \, q_2 \, \cdots q_n$ over the set $\{U,D,R,B\}$.
The corresponding string of brackets $S_n$
can be obtained by the following rules: 
\\
(i) We begin with ``()'' : Let $S_0=()$.
\\
(ii) For any $1 \leq k \leq n$, suppose
there exist a string of brackets $S'$ and 
a string of matching brackets $S''$
which are possibly empty
such that $S_{k-1}$ has the form $S'(S'')$.
Then $S_k$ is given by
\begin{center}
$S'((S'')()$ \, if $q_k=U$,
\qquad
$S'(S''))$ \, if $q_k=D$,
\\
$S'(S'')()$ \, if $q_k=R$,
\qquad
$S'((S''))$\, if $q_k=B$.
\end{center}
For example, the string of matching brackets
corresponding to the 2-Motzkin path $UBURDD$
is obtained as follows:
\be
() \stackrel{U}{\longrightarrow} (()()
\stackrel{B}{\longrightarrow} (()(())
\stackrel{U}{\longrightarrow} (()((())()
\stackrel{R}{\longrightarrow} (()((())()()
\stackrel{D}{\longrightarrow} (()((())()())
\stackrel{D}{\longrightarrow} (()((())()()))
\nn
\ee

We remark here that only blue steps
can make a stack. In other words,
directly nested structures such as ``$(())$''
never occur without blue steps.
Therefore,
a 1-Motzkin path can be translated
into a string of matching brackets without
directly nested brackets.
This is one of 
the 14 interpretations of Motzkin numbers
provided by Donaghey and Shapiro in \cite{DS_1977}.
Later, in \cite{LPC_2008}, 
it was also shown using context-free grammars
in the context of RNA shapes.
We also remark that the Motzkin polynomial coefficient $M(\ell-1, u)$
is the number of ways arranging $\ell$ pairs of brackets
to be correctly matched
and contain $\ell-u$ pairs as ``$()$''
with no occurrence of directly nested bracket.

Now we go back to the generating function
on the right hand side in \eqref{ouriden}
and rewrite it as
\be
\begin{split}
G(x,y,z)=&\sum_{\ell, u} M(\ell-1,u) \,
(x y^2 z)\,  \big((1+y) \sqrt z\big)^u \, \big( x y (1+y)z \big)^u
\\
&~~~\times \big((1+y) \sqrt z\big)^d \,
\big((1+y)^2z + xy(1+y)z \big)^s
\end{split}
\label{motzkinGFN}
\ee
where $u$, $d$ and $s$ stand for
the number of up, down and horizontal steps, respectively
($u=d$, $u+d+s=\ell-1$).
Let us explain each factor 
in detail by means of the above rules.
The term $xy^2 z$ is merely the starting hairpin ``(\us)''
(recall that the exponent of $x$, $y$ and $z$
are the number of hairpins, islands and basepairs, resp.).
At each up step, one has a left bracket
and a hairpin to add.
For a given non-empty string $S$ of island diagrams,
suppose that we add a left bracket
then there are the two possibilities, ``$(S\,$'' and ``$(\us S\,$''
corresponding to $\sqrt z$ and $y \sqrt z$, respectively.
Thus, we get the factor $(1+y) \sqrt z$
at every up step and, in the same manner, at every down step.
Likewise, adding a hairpin introduces the factor $xy(1+y)z$
since ``$S(\us)$'' and ``$S\us(\us)$''
corresponds to $x y z$ and $x y^2 z$, respectively.
On the other hand, 
a horizontal step can be either $R$ or $B$.
A red step is to add a hairpin and corresponds to $xy(1+y)z$.
A blue step  
is to add one basepair nesting the string ``$(S)$''
and there are three possibilities:
the stack ``$((S))$'' for z, the two bulges ``$(\us(S))$'' 
and ``$((S)\us)$'' for $yz$
and the interior loop ``$(\us(S)\us)$'' for $y^2 z$.
Therefore, we get
$((1+y)^2 z+ xy(1+y)z)$ at each horizontal step.

\section{Motzkin path and RNA shape}
\label{sec:shape}
The bijection map given in the previous section
reduces to the correspondence between 
1-Motkzin paths and RNA shapes, 
which is called $\pi$-shape.
In this section,
we exploit 1-Motzkin paths
to classify $\pi$-shapes, and
their generating functions
are given to calculate asymptotics.
By Motzkin paths, from here on,
we mean 1-Motzkin paths.

\subsection{Motzkin path and $\pi$-shape}

The Motzkin path is closely related to
the $\pi$-shape (or type 5),
which is one of the five RNA abstract shapes 
provided in \cite{GVR_2004}
classifying secondary structures
according to their structural similarities.
In order to describe $\pi$-shape,
let us first consider $\pi'$-shape (or type 1).
$\pi'$-shape
is an abstraction of secondary structures
preserving their loop configurations and unpaired regions.
A stem is represented as one basepair
and a sequence of maximally consecutive unpaired vertices
is considered as an unpaired region 
regardless of the number of unpaired vertices in it.
In terms of the dot-bracket representation,
a length $k$ stem ``$(^k \cdots )^k$'' is represented by
a pair of squared brackets ``$\bm{[} \cdots \bm]$''
and an unpaired region is depicted by an underscore. 
For instance, the $\pi'$-shape ``
$\us\bm[\bm[\bm[\us\bm]\us\bm[\us\bm]\bm]\us\bm]$''
abstracts the secondary structure ``
$...((((...)..((...))))..)$''.

In addition to the abstraction of $\pi'$-shape,
$\pi$-shape ignores unpaired regions.
Removing unpaired vertices,
two base pairs encircling
a bulge or an interior loop become consecutive base pairs
which then merge into a square bracket
according to the abstraction of a stem.
For example, the $\pi'$-shape
``$\us\bm[\bm[\bm[\us\bm]\us\bm[\us\bm]\bm]\us\bm]$'' 
results in the $\pi$-shape ``$\bm[ \bm[\bm]\bm[\bm]\bm]$''.
Consequently,
$\pi$-shapes 
retain only hairpin and multiloop configurations.
One may immediately notice that
the string representations of $\pi$-shapes
are nothing but the sequences of matching brackets without
directly nested brackets.
Therefore, as is remarked in the previous section,
we establish the bijection map between $\pi$-shapes 
and 1-Motzkin paths.
Recalling that a red step adds a hairpin
and an up step creates a multiloop and a hairpin,
the Motzkin polynomial coefficient $M(\ell-1,u)$
is the number of $\pi$-shapes
with $u$ multiloops and $\ell-u$ hairpins.

It is obivous that
$\pi$-shapes can be classified
with respect to the number of multiloops and hairpins
through the bijection map to Motzkin paths.
The significance of $\pi$-shapes, on the other hand,
is that the abstraction enables us to focus on the branching pattern
of folding.
Although each $\pi$-shape has different branching patterns,
one may still group them
according to a branching pattern similarity.
As one attempt to take account of such similarities,
we consider here the number of components.
Namely, we classify $\pi$-shapes
by the number of branches of the external loop.
Note that the number of components
is equivalent to $(r_0-1)$ in Motzkin path
where $r_0$ denotes the number of horizontal steps at level 0.
Therefore, in other words, we classify Motzkin paths by $r_0$.

\begin{table}[ht]
\centering
\begin{tabular}{|c c l|}
\hline 
~ 1-Motzkin path~  &     & ~~~~~~  $\pi$-shape \\  \hline \hline
 $u$   & $\longleftrightarrow$  & number of multi loops \\ \hline
$u+r+1$ & $\longleftrightarrow$   & number of hairpin loops \\ \hline
 $r_0+1$ & $\longleftrightarrow$   &  number of components 
  \\  \hline
\end{tabular}
\caption{Relation between Motkzin path and $\pi$-shape.
$u$, $r$ and $r_0$ denote
the number of up steps, horizontal steps
and horizontal steps at level 0, respectively.}
\label{tab:1}
\end{table}

\begin{propo}
The number of Motzkin paths of size $n$
with $r_0$ horizontal steps at level 0
is given by
\be
M(r_0;n)=\sum_{u=1}^{\lfloor \frac{n-r_0}2 \rfloor} M(r_0;n,u)
 ~~~ if ~~ n \neq r_0
\label{MN}
\ee
with $M(n;n)=1$
where
\be
M(r_0;n,u) = \frac{r_0+1}{n+1}
\binom{n+1}{u}F(n-r_0-1 \,, u-1) 
\ee
and $F(a,b)=\binom{a-b-1}{b}$ 
is Fibonacci polynomial coefficient.
\label{prop:mot}
\end{propo}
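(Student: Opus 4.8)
I plan to obtain $M(r_0;n,u)$ by computing a three-variable generating function for Motzkin paths, refined simultaneously by size, number of up steps, and number of level-0 horizontal steps, and then reading off the coefficients by Lagrange inversion. Throughout, let $M=M(t,v)$ be the ordinary Motzkin generating function marking size by $t$ and up steps by $v$, so that (as recorded in the excerpt) $M=1+tM+t^2vM^2$.

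First I would set up the factorization of a Motzkin path at the $x$-axis: every path decomposes uniquely into a sequence of blocks, each of which is either a single horizontal step at level $0$, or an arch $U\,P'\,D$ with $P'$ an arbitrary elevated Motzkin path. Marking level-0 horizontal steps by a new variable $s$, a flat block contributes $ts$ and an arch block contributes $t^2vM$ (the factor $M$ counting the internal path $P'$, whose own horizontal steps are not at level $0$ and hence carry no $s$). The sequence decomposition then gives
\[
L(t,v,s)=\frac{1}{1-ts-t^2vM}.
\]
Extracting the coefficient of $s^{r_0}$ amounts to choosing which blocks are the $r_0$ flats, yielding
\[
[s^{r_0}]L=\frac{t^{r_0}}{(1-t^2vM)^{r_0+1}}=t^{r_0}\left(\frac{M}{1+tM}\right)^{r_0+1},
\]
where the last equality uses $1-t^2vM=(1+tM)/M$, an immediate consequence of the functional equation for $M$. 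Thus $M(r_0;n,u)=[t^{\,n-r_0}v^u]\big(M/(1+tM)\big)^{r_0+1}$.

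The key simplifying step is the substitution $g:=tM$. From $M=1+tM+t^2vM^2$ one checks that $g=t\,\phi(g)$ with $\phi(g)=1+g+vg^2$, which is exactly the form required for Lagrange inversion, and moreover $M/(1+tM)=g/(t(1+g))$. Consequently $M(r_0;n,u)=[t^{\,n+1}v^u]\big(g/(1+g)\big)^{r_0+1}$. Applying Lagrange inversion to $H(g)=(g/(1+g))^{r_0+1}$, whose derivative is $H'(g)=(r_0+1)\,g^{r_0}/(1+g)^{r_0+2}$, converts this into
\[
M(r_0;n,u)=\frac{r_0+1}{n+1}\,[v^u]\,[g^{\,n-r_0}]\,\frac{(1+g+vg^2)^{n+1}}{(1+g)^{r_0+2}}.
\]
Extracting $[v^u]$ from $(1+g+vg^2)^{n+1}=\sum_{j}\binom{n+1}{j}(vg^2)^j(1+g)^{n+1-j}$ produces the factor $\binom{n+1}{u}g^{2u}(1+g)^{\,n+1-u}$, after which the remaining coefficient is $[g^{\,n-r_0-2u}](1+g)^{\,n-u-r_0-1}=\binom{n-u-r_0-1}{n-r_0-2u}=\binom{n-r_0-u-1}{u-1}=F(n-r_0-1,u-1)$, giving exactly the claimed formula. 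Summing over $u$ then yields $M(r_0;n)$, and the purely horizontal path (with $u=0$) accounts for the boundary value $M(n;n)=1$, for which the $u$-sum is empty.

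I expect the only real obstacles to lie at the setup stage: getting the level-0 bookkeeping in $L(t,v,s)$ exactly right—in particular ensuring that horizontal steps internal to arches are never counted by $s$—and recognizing the substitution $g=tM$ that linearizes the Motzkin equation into Lagrange form. Once these are in place the coefficient extractions are routine; the only care needed is to check the index ranges $1\le u\le\lfloor(n-r_0)/2\rfloor$ and the degenerate case $n=r_0$ so that the binomial coefficients are read correctly. A direct arch-and-flat decomposition would instead give $M(r_0;n,u)$ as a single binomial sum over the number of arches, but this would then require proving a nontrivial binomial identity; routing the computation through Lagrange inversion avoids that identity entirely.
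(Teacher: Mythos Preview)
Your argument is correct, and it reaches the formula by a genuinely different route from the paper. The paper's proof is purely combinatorial: it first strips all horizontal steps to obtain a Dyck path with $u$ up steps and $p$ irreducible arches, counted by the Catalan $p$-fold convolution $C(u;p)=\frac{p}{u}\binom{2u-p-1}{u-1}$; it then reinserts $r_0$ level-$0$ horizontal steps into the $p+1$ ground slots and the remaining $n-2u-r_0$ horizontal steps into the $2u-p$ elevated slots, arriving at the single sum
\[
M(r_0;n,u)=\sum_{p=1}^{u}\binom{r_0+p}{r_0}\binom{n-r_0-p-1}{n-2u-r_0}\,C(u;p),
\]
which is then collapsed to the closed form by a Chu--Vandermonde-type identity. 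Your approach instead packages the same arch/flat decomposition into the trivariate generating function $L(t,v,s)$, passes to $g=tM$ so that $g=t(1+g+vg^2)$, and lets Lagrange inversion deliver the closed form directly. In effect, your final remark describes exactly what the paper does: the ``nontrivial binomial identity'' you avoid is precisely the Vandermonde step the paper invokes. The trade-off is that the paper's argument is elementary and bijective in spirit but needs that identity at the end, whereas yours is cleaner algebraically and explains the prefactor $(r_0+1)/(n+1)$ structurally, at the cost of importing Lagrange inversion.
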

\begin{proof}
See Appendix \ref{app:1}.
\end{proof}
With a fixed size $n$, one may plot the distribution 
of $M(r_0;n)$ as a function of $r_0$
to find that the number of Motzkin paths
decreases as $r_0$ increases for $r_0 \geq 1$.
See Figure \ref{mplot}.
One interesting feature is that
$M(0;n)$ and $M(1;n)$ differ by $(-1)^n$,
which is not straightforward to derive
for arbitrary $n$ using the explicit form \eqref{MN}.
In order to prove the feature as well as 
to investigate the asymptotic distribution at large $n$,
one needs to find its generating function.

Let $R_0$ stand for a horizontal step at level 0.
A brief description of the method is as follows: first we construct
the generating function for the Motzkin paths without $R_0$
and then, put those paths together with $R_0$ steps
to obtain the desired generating function.
For convenience,
let us denote by $\{\epsilon\}, \cM$ and $\cA$
a path with size zero which means identity,
the class of all Motzkin paths
and the class of the Motzkin paths without $R_0$, respectively.
We begin with the generating function $m(v,w)$ 
of $\cM$ given in \eqref{gfmotzkin}.
By adding an up step($U$) and a down step($D$)
at each end of $\cM$,
one obtains the class 
$\widehat{\cM}$ of the Motzkin paths that never touch the level 0
apart from the starting and ending points.
In terms of the symbolic enumeration methods,
this can be written as $\widehat{\cM} = U \times \cM \times D$.
Recalling that $v$ and $w$ in $m(v,w)$
are the expansion variables for up-step and size,
$U \times \cM \times D$ translates into
the generating function $v w^2 m(v,w)$.
The paths in $\cA$ can then be achieved
by concatenating the paths in $\widehat{\cM}$ 
that is symbolically represented as
\be
\cA = SEQ(\widehat{\cM}) :=
\{\epsilon\}+\widehat{\cM}+(\widehat{\cM} \times \widehat{\cM})+
(\widehat{\cM} \times \widehat{\cM} \times \widehat{\cM}) 
+ \cdots 
\ee
such that one obtains the generating function $A(v,w)$ of $\cA$ as
\be
A(v,w)=\frac1{1-v w^2 m(v,w)} \,.
\ee
Now we glue the paths in $\cA$ together with $R_0$ steps.
With a given number $k$ of $R_0$ steps,
there are $(k+1)$ empty slots to be filled with
either $\{\epsilon\}$ or $(\cA \setminus\{\epsilon\})$.
For instance,
the Motzkin paths with $r_0=1$ is represented by
\be
(\{\e\} \times R_0 \times \{\e\})
+ (\{\e\} \times R_0 \times \cB)
+ (\cB \times R_0 \times \{\e\})
+(\cB \times R_0 \times \cB)
\ee
where $\cB=(\cA \setminus \{\e\})$
that yields $t w A(v,w)^2$
when we employ $t$ as the expansion variable for $r_0$.
Likewise, when $r_0=k$, 
one finds $t^k w^k A(v,w)^{k+1}$.
Therefore, the desired generating function is 
\be
m(t;v,w) := \sum_{r_0,n,u}M(r_0;n,u) \,t^{r_0} \, v^u \, w^n
= \frac{A(v,w)}{1-t w A(v,w)} \,.
\label{cMN}
\ee
It is now straightforward to prove the relation
\be
M(0;n)-M(1;n)=(-1)^n ~~~\mathrm{for}~~ n \geq 1 
\ee
since $ M(0;n)-M(1;n)=[w^n](A(1,w)-w A (1,w)^2)$
and $A(1,w)-w A(1,w)^2=1/(1+z)$.
Furthermore,
one can exploit the generating function $m(t;1,w)$
to find the following asymptotics.
See Appendix \ref{app:2-1} for its proof.
\begin{thm} 
Let $r_0$ be a non-negative integer.
For $r_0 \ll n$,  the distribution of 
$M(r_0;n)$ holds
\be
\lim_{n \to \infty}
\frac{M(r_0;n)}{M_n} = \frac{r_0+1}{2^{r_0+2}} 
\label{mot1}
\ee
and at large $n$ limit, 
the expected number of horizontal steps at level 0 is 2:
\be
\lim_{n \to \infty} \frac{\sum_{r_0} r_0 M(r_0;n)}{M_n} = 2 \,.
\label{mot2}
\ee
\label{thm:mot}
\end{thm}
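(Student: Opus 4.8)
The plan is to extract everything from the generating function $m(t;1,w)$ in \eqref{cMN}, setting $v=1$ since the number of up steps plays no role in the statement. Write $m(w):=m(1,w)$ and $A(w):=A(1,w)$. The rational dependence on $t$ makes the coefficient of $t^{r_0}$ immediate:
\[
M(r_0;n)=[t^{r_0}w^n]\frac{A(w)}{1-twA(w)}=[w^{n}]\,w^{r_0}A(w)^{r_0+1}=[w^{n-r_0}]A(w)^{r_0+1}.
\]
Both limits then reduce to coefficient asymptotics for powers of $A$, which I would obtain by singularity analysis (the Flajolet--Odlyzko transfer theorem).

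First I would pin down the dominant singularity. Since $1-2w-3w^2=(1-3w)(1+w)$, the square root in \eqref{gfmotzkin} forces a branch point at $w=\rho:=1/3$, the nearest singularity to the origin. Setting $t=v=1$ in \eqref{cMN} recovers the full Motzkin series, $A(w)/(1-wA(w))=m(w)$, hence the useful identity
\[
A(w)=\frac{m(w)}{1+w\,m(w)},\qquad\text{equivalently}\qquad 1-wA(w)=\frac{1}{1+w\,m(w)}.
\]
Because $1+w\,m(w)$ has positive coefficients and does not vanish on $|w|\le\rho$, the function $A$ inherits exactly the singularity of $m$ at $\rho$ and no other on the circle $|w|=\rho$; both continue to a $\Delta$-domain, so the transfer theorem applies. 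A local expansion at $w=\rho$ gives the square-root behaviour
\[
m(w)=3-3\sqrt3\,\sqrt{1-3w}+\cdots,\qquad A(w)=\tfrac32-\tfrac{3\sqrt3}{4}\sqrt{1-3w}+\cdots,
\]
so in particular $A(\rho)=3/2$ and $m(\rho)=3$.

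For \eqref{mot1} I raise $A$ to the power $r_0+1$, whose singular part is $-(r_0+1)(3/2)^{r_0}\tfrac{3\sqrt3}{4}\sqrt{1-3w}$. Transferring this against the singular part of $m$, the common factor $3^n n^{-3/2}/(2\sqrt\pi)$ cancels in the ratio $M(r_0;n)/M_n$; the shift from $[w^{n-r_0}]$ to $[w^n]$ contributes a factor $3^{-r_0}$ and is otherwise harmless since $(n-r_0)^{-3/2}\sim n^{-3/2}$, so the surviving $(3/2)^{r_0}\,3^{-r_0}=2^{-r_0}$ together with the constant $\tfrac{3\sqrt3/4}{3\sqrt3}=\tfrac14$ collapses the limit to $(r_0+1)/2^{r_0+2}$. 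For \eqref{mot2} I read the mean off the $t$-derivative: $\sum_{r_0}r_0M(r_0;n)=[w^n]\,\partial_t m(t;1,w)\big|_{t=1}$ with $\partial_t m(t;1,w)\big|_{t=1}=wA^2/(1-wA)^2$. Substituting $1-wA=1/(1+wm)$ collapses this to $w\,m(w)^2$; transferring $[w^{n-1}]m(w)^2$ against $M_n=[w^n]m(w)$ leaves $2\,m(\rho)/3=2$.

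The bulk of the work is analytic bookkeeping rather than a single hard step: confirming that $\rho=1/3$ is the unique dominant singularity and that a $\Delta$-domain exists (legitimising the transfer theorem), and computing the local square-root coefficients of $m$ and $A$ without error. The one genuinely convenient step, which I would isolate explicitly, is the identity $1-wA=1/(1+wm)$: it turns the a priori awkward mean generating function $wA^2/(1-wA)^2$ into the bare $w\,m(w)^2$, making \eqref{mot2} fall out at once.
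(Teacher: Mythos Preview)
Your proposal is correct and follows essentially the same route as the paper: singularity analysis of $m(t;1,w)$ at the dominant singularity $w=1/3$, with the mean handled via $\partial_t m|_{t=1}=w\,m(w)^2$ exactly as in the paper. The only notable difference is computational: the paper extracts the singular part of $A(w)^{r_0+1}$ by expanding the explicit algebraic formula for $A$ into a binomial sum and discarding the rational piece (whose singularity at $w=-1$ contributes subexponentially), whereas you obtain the local expansion $A(w)=\tfrac32-\tfrac{3\sqrt3}{4}\sqrt{1-3w}+\cdots$ directly from the identity $A=m/(1+wm)$; your route is slightly cleaner but not conceptually different.
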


We plot the asymptotic distribution of \eqref{mot1}
as a function of $r_0$
in Figure \ref{mplot}.
At the limit of large size,
the number of Motzkin paths
exponentially 
decreases as $r_0$ increases when $r_0 \geq 1$,
and
the half of them
is the ones with $r_0=0$ and $1$.

\begin{figure}[ht]
\centering
\includegraphics[width=0.5\textwidth]{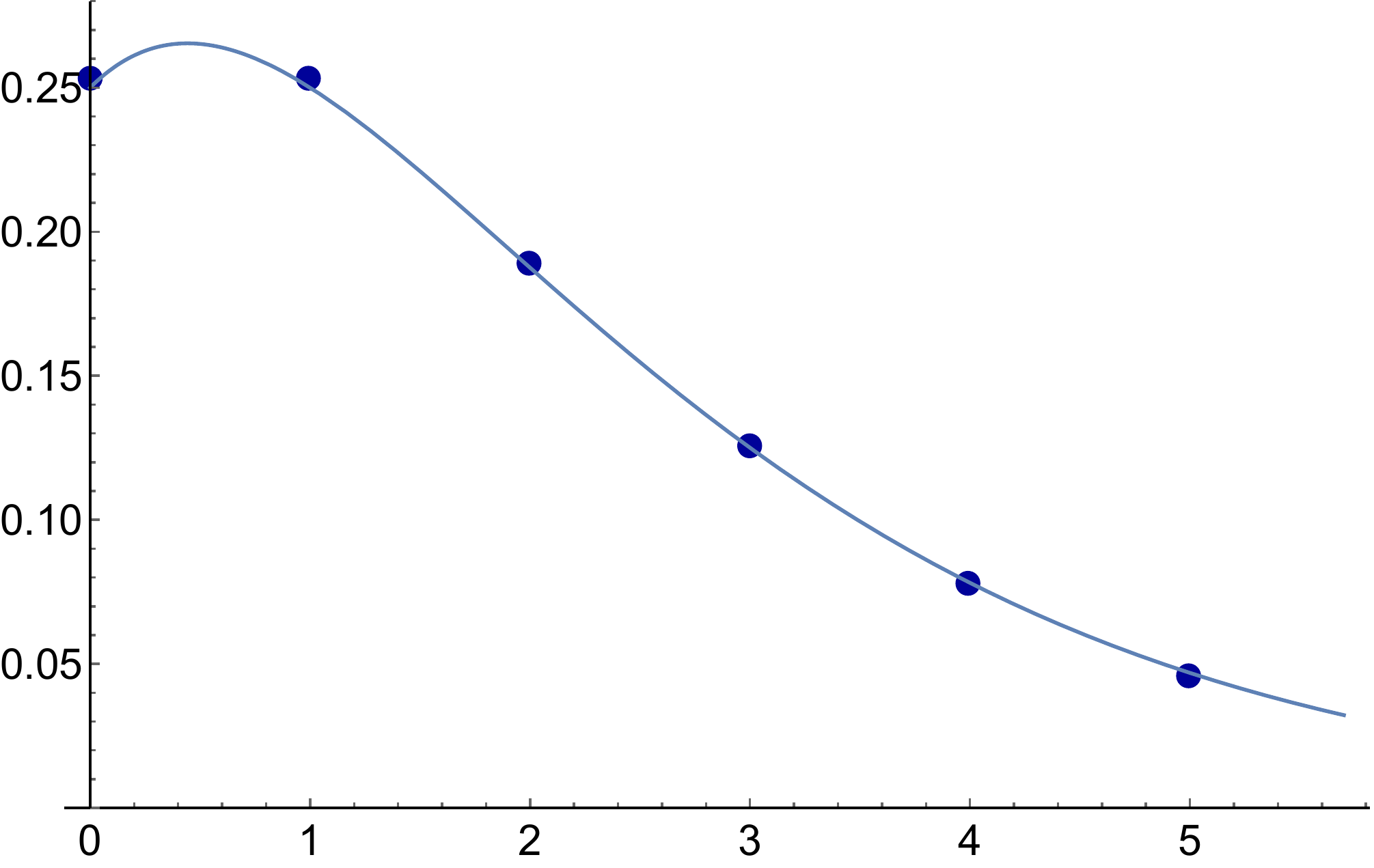}
\caption{Normalized distribution $M(r_0;n)/M_n$
of Motzkin paths as a function of $r_0$. The dots are given for $n=100$. 
The line is the asymptotic distribution as $n \to \infty$.}
\label{mplot}
\end{figure}

\subsection{$\pi$-shapes compatible with secondary structures}
In the previous subsection, we considered
Motzkin paths of size $n$, or equivalently, 
$\pi$-shapes of length $2n+2$.
On the other hand, 
instead of $\pi$-shapes with their own length fixed,
one may regard $\pi$-shapes
to which secondary structures of a given length are reducible \cite{LPC_2008}.
Namely, in this subsection,
we consider the class of $\pi$-shapes
compatible with secondary structures of length $\nu$,
and classify it according to the number of components.
The class of compatible $\pi$-shapes
is merely the collection of $\pi$-shapes
of length less than or equal to $\nu$.
However, we shall impose an additional constraint on it
so-called minimum arc-length
which was introduced to reflect the rigidity of the backbone of RNA
\cite{SW_1979}.
The minimum arc-length $\la$
indicates the condition on secondary structures
that each hairpin loop
consists of at least $\lambda-1$ unpaired vertices.
Therefore, 
we first construct $\pi$-shapes with $\la-1$ unpaired vertices
assigned in each hairpin loop
and collect such $\pi$-shapes
of length less than or equal to $\nu$.

Let $\pi_{\la}(r_0;\nu)$ denotes
the number of $\pi$-shapes compatible
with secondary structures of minimum arc-length $\la$,
components $r_0+1$ and length $\nu$,
and we define $\pi_{\la}(\nu) := \sum_{r_0}\pi_{\la}(r_0;\nu)$.
One may obtain its generating function 
by simply manipulating the expansion variables
of $m(t;v,w)$ as follows:
first, since $h=n-u+1$ from Table \ref{tab:1},
$x$ is the expansion parameter for $h$
by the change of variables $v \to v/x$ and $w \to x w$
with the multiplication of the overall factor $x$.
Second, the change of variables $x \to x z^{\la-1}$
and $w \to w z^2$ with the overall multiplication $z^2$
leads to the expansion parameter $z$ to count
the number of vertices including unpaired ones in hairpin loops.
Third, since only the number of vertices and components
are of interest here, 
taking irrelevant variables as unity, $x=v=w=1$ 
results in 
$z^{\la+1} m(t; z^{1-\la},z^{\la+1})$
which enumerates
the number of $\pi$-shapes with $\la-1$
unpaired vertices in each hairpin loop.
Finally, multiplying $1/(1-z)$,
one finds the generating function of $\pi_{\la}(r_0;\nu)$
\be
\sum_{r_0,\nu}\pi_{\la}(r_0;\nu) \, t^{r_0} z^{\nu}=
\frac{z^{\la+1}}{1-z}\, m(t;z^{1-\la},z^{\la+1}) \,.
\label{GFpi}
\ee
Using the generating function, one can 
evaluate the following asymptotics.
Its proof is given in Appendix \ref{app:2-2}.
\begin{thm} 
Let $\la$ be a positive integer.
Suppose that $\zeta_\la$ 
is the smallest root among
the positive real roots of 
the polynomial $z^{2\la+2}-4z^{\la+3}-2z^{\la+1}+1$.
For $r_0 \ll \nu$,  the asymptotic distribution of 
compatible $\pi$-shapes $\pi_{\la}(\nu)$
with respect to the number $r_0+1$ of components
is given as
\be
\lim_{\nu \to \infty}
\frac{\pi_{\la}(r_0;\nu)}{\pi_{\la}(\nu)} 
=(r_0+1) \biggl( \frac{\zeta_\la^2}{1+\zeta_\la^2} \biggr)
\biggl( \frac{1+\zeta_\la^{\la+1}}{2\big(1+\zeta_\la^2 \big)} \biggr)^{r_0}
\label{piasymp}
\ee
and the expected number of $r_0$ satisfies
\be
\lim_{\nu \to \infty} 
\frac{\sum_{r_0} r_0 \pi_{\la}(r_0;\nu)}{\pi_{\la}(\nu)}
= \frac{1-\zeta_\la^{\la+1}}{\zeta_\la^2} \,.
\ee
\label{thm:com}
\end{thm}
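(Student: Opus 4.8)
The plan is to extract the stated asymptotics from the explicit generating function \eqref{GFpi} by singularity analysis. First I would set $r_0$ aside by treating $t$ as a formal marker: the full bivariate generating function is
\be
P(t,z)=\frac{z^{\la+1}}{1-z}\,m(t;z^{1-\la},z^{\la+1})
=\frac{z^{\la+1}}{1-z}\cdot\frac{A(z^{1-\la},z^{\la+1})}{1-t\,z^{\la+1}A(z^{1-\la},z^{\la+1})}\,,
\ee
using the closed form \eqref{cMN} for $m(t;v,w)$. Writing $\alpha(z):=z^{\la+1}A(z^{1-\la},z^{\la+1})$, the coefficient of $t^{r_0}$ is $\frac{z^{\la+1}}{1-z}\,\alpha(z)^{r_0}/z^{\la+1}$ up to bookkeeping, so $\sum_\nu \pi_\la(r_0;\nu)z^\nu=\frac{z^{\la+1}}{1-z}\,\alpha(z)^{r_0}$. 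The key analytic object is therefore the single-variable function $\alpha(z)$, and everything reduces to locating its dominant singularity $\zeta_\la$ and evaluating $\alpha(\zeta_\la)$ there.

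Next I would identify $\zeta_\la$. Using $A(v,w)=1/(1-vw^2 m(v,w))$ together with \eqref{gfmotzkin}, I expect that substituting $v=z^{1-\la}$, $w=z^{\la+1}$ produces a square-root singularity whose branch point is exactly the smallest positive root of the discriminant-type polynomial $z^{2\la+2}-4z^{\la+3}-2z^{\la+1}+1$ quoted in the statement; verifying that this polynomial is what emerges from setting the argument of the radical in $m(v,w)$ to zero is the first computation to carry out. Since both $\frac{z^{\la+1}}{1-z}$ and the factors of $\alpha(z)$ are analytic at $z=\zeta_\la<1$ except through this branch point, $\zeta_\la$ is the common dominant singularity of $\sum_\nu\pi_\la(r_0;\nu)z^\nu$ for every fixed $r_0$, and of the total $\sum_\nu\pi_\la(\nu)z^\nu=\frac{z^{\la+1}}{1-z}\cdot\frac{1}{1-\alpha(z)}$ as well (one must check $\alpha(\zeta_\la)<1$ so that the extra pole from $1/(1-\alpha)$ does not interfere). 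Standard transfer theorems then give $[z^\nu]\big(\tfrac{z^{\la+1}}{1-z}\alpha(z)^{r_0}\big)\sim c_{r_0}\,\zeta_\la^{-\nu}\nu^{-3/2}$, and the ratio $\pi_\la(r_0;\nu)/\pi_\la(\nu)$ in the limit collapses to a ratio of the constants $c_{r_0}$, which involve $\alpha(\zeta_\la)$ and the local derivatives.

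The decisive step is evaluating $\alpha(\zeta_\la)$ in closed form. At a square-root singularity of $m$ the radical vanishes, so $m(z^{1-\la},z^{\la+1})$ takes the explicit value $\tfrac{1-w}{2vw^2}$ at $z=\zeta_\la$; plugging this into $A=1/(1-vw^2m)$ gives a rational expression in $\zeta_\la$, and I expect it to simplify—after using the defining polynomial relation for $\zeta_\la$—to $\alpha(\zeta_\la)=\tfrac{1+\zeta_\la^{\la+1}}{2(1+\zeta_\la^2)}$, which is precisely the base appearing in \eqref{piasymp}. The prefactor $\tfrac{\zeta_\la^2}{1+\zeta_\la^2}$ and the linear weight $(r_0+1)$ should come out of the same local expansion: because $\alpha(z)^{r_0}$ and $1/(1-\alpha(z))$ share the branch point, the leading singular coefficients are governed by $\alpha'(\zeta_\la)$, and the factor $(r_0+1)$ arises exactly as in Proposition \eqref{cMN}'s structure—differentiating $1/(1-t\alpha)$ in $t$ distributes the weight linearly. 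The mean formula then follows by computing $\sum_{r_0}r_0\,(\text{ratio})$, i.e. summing the geometric-type series $\sum_{r_0}(r_0+1)r_0\,\rho^{r_0}$ with $\rho=\alpha(\zeta_\la)$ and simplifying via the polynomial identity, which I anticipate telescopes to $(1-\zeta_\la^{\la+1})/\zeta_\la^2$.

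The main obstacle I foresee is the algebraic simplification at $\zeta_\la$: showing that the raw rational expression for $\alpha(\zeta_\la)$ reduces to the clean form $\tfrac{1+\zeta_\la^{\la+1}}{2(1+\zeta_\la^2)}$ requires using the relation $\zeta_\la^{2\la+2}-4\zeta_\la^{\la+3}-2\zeta_\la^{\la+1}+1=0$ to eliminate high powers of $\zeta_\la$, and keeping the $\la$-dependent exponents straight through the substitutions $v\to z^{1-\la}$, $w\to z^{\la+1}$ is error-prone. A secondary technical point is confirming that $\zeta_\la$ is genuinely the dominant singularity (that $\alpha(\zeta_\la)<1$ and that no competing pole of $1/(1-\alpha)$ lies inside $|z|\le\zeta_\la$), which legitimizes the uniform transfer across all fixed $r_0$ and justifies exchanging the limit with the sum over $r_0$ in the mean computation.
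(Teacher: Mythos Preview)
Your approach---extracting each of $\pi_\la(r_0;\nu)$, $\pi_\la(\nu)$, and $\sum_{r_0}r_0\,\pi_\la(r_0;\nu)$ from the generating function \eqref{GFpi} via singularity analysis at the branch point of the radical in $m$---is exactly the one the paper follows. The key evaluation $\alpha(\zeta_\la)=\tfrac{1+\zeta_\la^{\la+1}}{2(1+\zeta_\la^2)}$ that you anticipate is indeed what the paper obtains (it comes out directly once the radical vanishes, so the algebra is less painful than you fear).

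There is, however, one genuine oversight in your plan. You treat $\zeta_\la$ as \emph{the} dominant singularity, and flag ``confirming that $\zeta_\la$ is genuinely the dominant singularity'' as a routine check. But when $\la$ is odd, every exponent in $z^{2\la+2}-4z^{\la+3}-2z^{\la+1}+1$ is even, so the polynomial is even in $z$ and $-\zeta_\la$ is also a root of the same modulus. The generating functions then have \emph{two} dominant singularities at $\pm\zeta_\la$, and the single-singularity transfer theorem you invoke does not apply as stated; one must use the version for finitely many dominant singularities and sum the contributions. The paper does this explicitly, writing separate asymptotics for $\la$ even and $\la$ odd (the odd case acquires an oscillating factor $\tfrac{1}{1-\zeta_\la}+\tfrac{(-1)^\nu}{1+\zeta_\la}$), and the theorem survives only because this same factor appears in numerator and denominator and cancels in the ratios. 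Your sketch as written would produce wrong intermediate asymptotics for odd $\la$, even though the final ratio happens to be correct.

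A smaller difference: for the expected value you propose to sum the limiting distribution $\sum_{r_0}r_0(r_0+1)\rho^{r_0}$ and then simplify. The paper instead applies singularity analysis directly to $t\,\partial_t P(t,z)\big|_{t=1}$, which avoids having to justify the interchange of $\lim_{\nu\to\infty}$ with the infinite sum over $r_0$ (your pointwise asymptotic for $\pi_\la(r_0;\nu)$ is only valid for fixed $r_0\ll\nu$, so the tail needs a separate argument in your route).
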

The expected number of components
in the collection of compatible $\pi$-shapes
is then $(1-\zeta_\la^{\la+1})/\zeta_\la^2 +1$ at large $\nu$ limit.
The case of $\la =4$, that is, the minimum of 3 unpaired vertices 
in each hairpin is often regarded as the most realistic one.
As an example, therefore, 
we plot the distribution for $\la=4$ in Figure \ref{pi4plot},
in which case, $\zeta_4 \approx 0.7563$
and the asymptotic distribution of \eqref{piasymp}
is given by $(r_0+1) \times 0.3639 \times 0.3968^{r_0}$.
The number of compatible $\pi$-shapes
exponentially decreases as the number of components increases.
The expected number of $r_0$ is
approximately $1.316$
and hence
the expected number of components
is approximately $2.316$.

\begin{figure}[ht]
\centering
\includegraphics[width=0.5\textwidth]{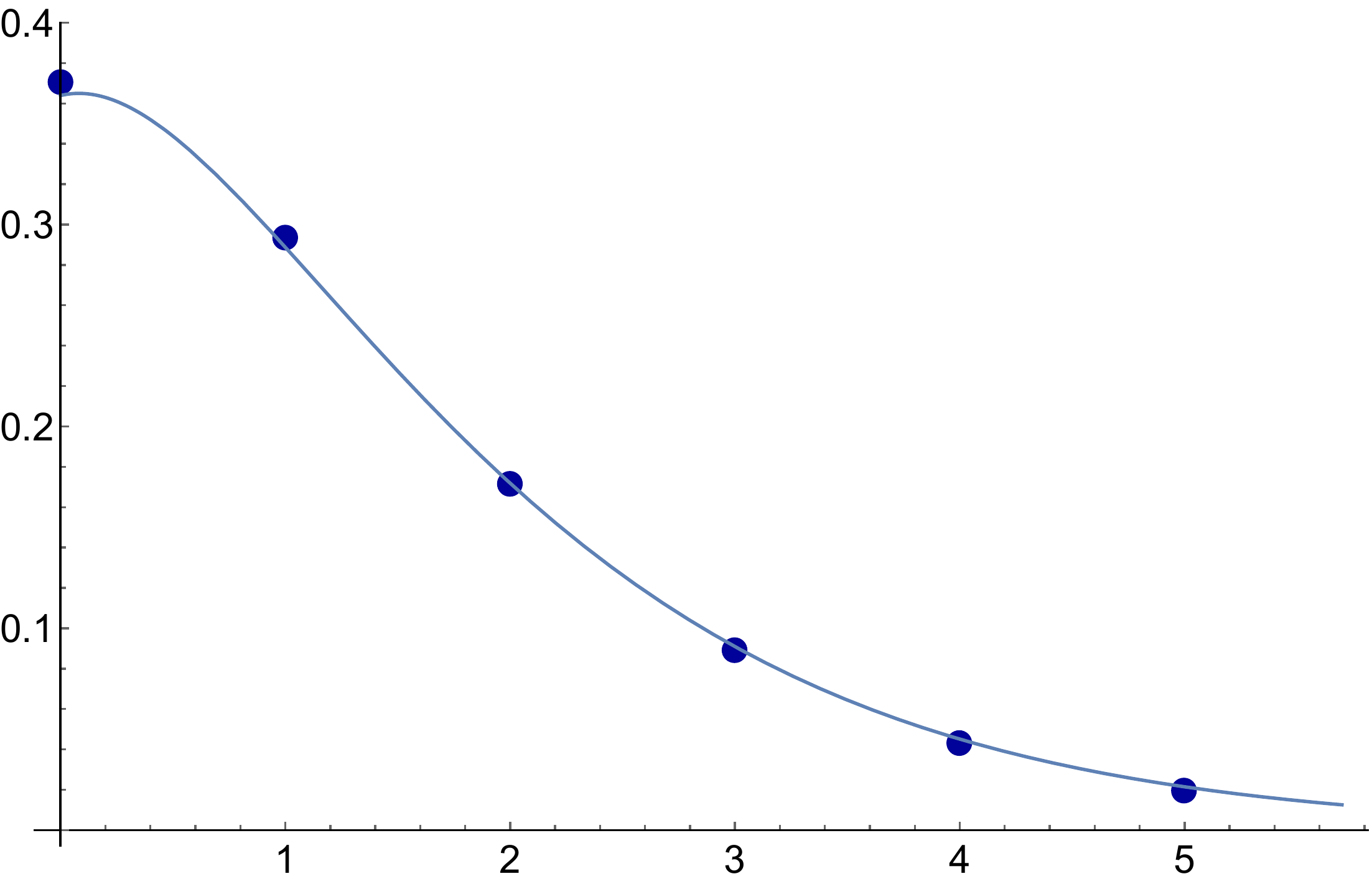}
\caption{Normalized distribution $\pi_4(r_0;\nu)/\pi_4(\nu)$
as a function of $r_0$. The dots are given for $\nu=200$. 
The line is the asymptotic distribution at the limit of $\nu \to \infty$.}
\label{pi4plot}
\end{figure}

\section{Conclusion}
In this paper, we established
the combinatorial interpretation of 2-Motzkin paths
to describe the island diagrams.
The generating function of island diagrams
was calculated using both 2-Motzkin paths
and Narayana numbers,
from which we found the identity \eqref{ouriden}
generalizing the Coker's identities.
The correspondence between
2-Motzkin paths and strings of matching brackets
reduces to the bijection map
between 1-Motzkin paths and $\pi$-shapes.
Subsequently, we classified
$\pi$-shapes by the number of components
and calculated 
the asymptotic distributions
and the expected number of components.
We observed that
the number of compatible $\pi$-shapes
exponentially decreases as the number of components increases.
In the case of $\la=4$, for instance,
the asymptotic distribution is 
given by $(r_0+1) \times 0.3639 \times 0.3968^{r_0}$
and the expected number of components
is approximately 2.316.

The bijection map 
between Motzkin paths and $\pi$-shapes
provides an additional combinatorial tool
to explore classifications and abstractions of secondary structures.
As one intuitive and immediate attempt 
concerning branching pattern similarities,
the number of components was taken into account.
We will furthermore study on
possible abstractions of RNA shapes,
which may greatly reduce the number of structures
while retaining structural similarities to some extent.

\vskip 10pt

\begin{center}
{\bf Acknowledgements}
\end{center} 
This work was supported by
the National Science Foundation
of China (Grant No. 11575119).

\begin{appendix}

\section{Proof of Proposition \ref{prop:mot}}
\label{app:1}
In order to classify Motzkin paths
according to the number of horizontal steps at level 0,
one may begin with Dyck paths.
A Dyck path is a lattice path
that never passes below the $x$-axis
and allows only the up step and the down step,
in other words,  
a Motzkin path without a horizontal step.
The Catalan number $C_u$
is the number of Dyck paths of length $2u$({\it i.e.}, of $u$ up steps),
for instance, the number $C_3=5$ of paths is shown in Figure \ref{fig:catalan}.
Furthermore, 
it is also known that
the number of Dyck paths
composed of the number $p$ of irreducible paths,
where by irreducible paths,
we mean Dyck paths that never touch the $x$-axis
between the starting and ending point.
The number is often referred to as Catalan $p$-fold convolution formula
\cite{C_1887, L_2000}:
\be
C(u;p)=\frac{p}u \binom{2u-p-1}{u-1} \,.
\ee
For example, $C(3,1)=2$, $C(3,2)=2$ and $C(3,3)=1$.

\begin{figure}[h]
\centering
\includegraphics[width=1.0\textwidth]{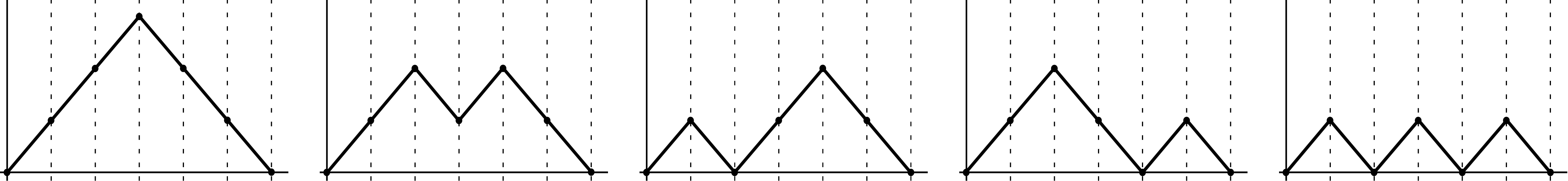}
\caption{\label{fig:catalan}
Dyck paths of $C_3$. From the left side,
the first two are of $C(3,1)$,
the next two are of $C(3,2)$
and the last one is of $C(3,3)$.}
\end{figure}

For given Dyck paths of $C(u;p)$,
one adds $n-2u$ horizontal steps
to obtain Motzkin paths of length $n$.
There are $2u+1$ places
into which one possibly inserts horizontal steps,
and of which $p+1$ places are at $x$-axis.
We first put $r_0$ horizontal steps into the $p+1$ places
and then put the remains into other places.
Summing over $p$, one finds
\be
M(r_0;n,u)= \sum_{p=1}^u \binom{r_0+p}{r_0}
\binom{n-r_0-p-1}{n-2u-r_0} C(u;p) \,.
\label{b2}
\ee
Note here that when $p=0$ or equivalently $u=0$,
we still have one Motzkin path consisting of
only horizontal steps and thus, $M(n;n,0)=1$.
One may manipulate
\eqref{b2} with the help of the identity analogous to 
Chu-Vandermonde's identity,
\be
\binom{m+n+1}{n}=\sum_{\a=0}^{n}
\binom{m+n-(t+\a)}{n-\a}\binom{t+\a}{\a}
\ee
where $0 \leq t \leq m$
and arrive at the final form
\be
M(r_0;n,u)=\frac{r_0+1}{n+1}\binom{n+1}u
\binom{n-r_0-u-1}{u-1} \,.
\ee

\section{Asymptotics}
\label{app:2}
In this section, we briefly demonstrate
the procedure calculating the asymptotics
needed in Theorem \ref{thm:mot} and Theorem \ref{thm:com}.
We shall first present theorems exploited to obtain
the asymptotics without proof.
For their proof and also
for more rigorous and complete discussions,
one can refer to the book by Flajolet and Sedgewick \cite{FS_2009}
and the paper by Flajolet and Odlyzko  \cite{FO_1990}.






\begin{thm}
{\rm (Exponential Growth Formula).}
If $f(z)$ is analytic at 0 and $R$
is the modulus of a singularity nearest to the origin
in the sense that
\be
R:=\mathrm{sup}\,\big\{r \geq 0 \,\big|\, f ~ is~ analytic~ in~ |z| <r \big\}\,,
\ee
then the coefficient $f_n=[z^n] f(z)$ satisfies
\be
f_n = R^{-n} \,  \theta(n)
\ee
where $\theta$ is a subexponential factor:
\be
\mathrm{lim \, sup}\, |\theta(n)|^{1/n} =1 \,.
\ee
\label{expgrowth}
\end{thm}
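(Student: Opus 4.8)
The plan is to recognize this statement as the Cauchy--Hadamard theorem in disguise: the quantity $R$, defined as the supremum of radii of disks on which $f$ is analytic, is exactly the radius of convergence of the Taylor series $\sum_n f_n z^n$ of $f$ at the origin, and the conclusion is then a one-line manipulation of the classical root-test formula for that radius.

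First I would show that $R$ coincides with the radius of convergence $\rho$ of the power series $\sum_n f_n z^n$. The inequality $\rho \ge R$ follows from Cauchy's theory: since $f$ is analytic on $|z| < R$, Taylor's theorem (via the Cauchy integral formula on any circle $|z| = r < R$) guarantees that its Taylor series converges to $f$ throughout $|z| < R$, so the series has radius of convergence at least $R$. For the reverse inequality $\rho \le R$, I would note that the sum $g(z) := \sum_n f_n z^n$ is analytic on $|z| < \rho$ and agrees with $f$ on $|z| < R$; were $\rho > R$, then $g$ would furnish an analytic extension of $f$ to the strictly larger disk $|z| < \rho$, contradicting the definition of $R$ as the supremum of radii of analyticity. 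Hence $\rho = R$.

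Next I would invoke the Cauchy--Hadamard formula, $1/\rho = \limsup_n |f_n|^{1/n}$, which together with $\rho = R$ gives $\limsup_n |f_n|^{1/n} = 1/R$. Setting $\theta(n) := f_n R^n$ trivially yields the stated form $f_n = R^{-n}\theta(n)$, and the subexponential property is immediate:
\be
\limsup_{n\to\infty} |\theta(n)|^{1/n}
= \limsup_{n\to\infty}\big(|f_n|\,R^n\big)^{1/n}
= R \cdot \limsup_{n\to\infty} |f_n|^{1/n}
= R \cdot \frac1R = 1 \,.
\ee

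The only genuinely nontrivial step is the identification $\rho = R$, since the remainder is bookkeeping with the root test. The subtlety there is the direction $\rho \le R$, which relies on the uniqueness of analytic continuation: the series $g$ cannot extend $f$ beyond the largest disk of analyticity. I expect no difficulty beyond citing the standard complex-analytic facts. Indeed, for the applications in this paper the relevant generating functions are explicit algebraic or rational expressions whose dominant singularity $R$ can be located directly, so this theorem is used only to pass from that singularity to the exponential growth rate $R^{-n}$ of the coefficients.
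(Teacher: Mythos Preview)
The paper does not actually prove this theorem: it is quoted in the appendix explicitly ``without proof,'' with a pointer to Flajolet--Sedgewick and Flajolet--Odlyzko for the argument. So there is no paper-proof to compare against.

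Your proposal is correct and is essentially the standard derivation one finds in those references: identify $R$ with the radius of convergence $\rho$ of the Taylor series at $0$, then read off $\limsup |f_n|^{1/n}=1/R$ from Cauchy--Hadamard and repackage as $f_n=R^{-n}\theta(n)$. The only place to be slightly more careful is the inequality $\rho\le R$: your phrasing (``$g$ would furnish an analytic extension \ldots\ contradicting the definition of $R$'') tacitly uses that the hypothesized singularity on $|z|=R$ obstructs any analytic continuation of $f$ to a full disk of larger radius; that is precisely what ``$R$ is the modulus of a singularity nearest to the origin'' is meant to encode, so the step is fine once this is made explicit. Otherwise the argument is complete.
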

When $f_n = R^{-n} \,  \theta(n)$,
we say that $f_n$ is of exponential order $R^{-n}$.
The singularities which lie on the boundary of the disc of
convergence are called dominant singularities.
The following theorem
guarantees that counting generating functions
have positive real numbers 
as their dominant singularities.

\begin{thm}
{\rm (Pringsheim's theorem).}
If $f(z)$ is representable at the origin by
a series expansion that has non-negative coefficients
and radius of convergence $R$, then
the point $z=R$ is a singularity of $f(z)$.
\label{Pring}
\end{thm}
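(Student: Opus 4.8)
The plan is to argue by contradiction, converting the hypothesis of non-negative coefficients into a rigidity statement about the growth of $f$ along the positive real axis. I may assume $0<R<\infty$, since $R=\infty$ makes the claim vacuous. Suppose $f$ were analytic at $z=R$; then $f$ is analytic on the open set $U=\{|z|<R\}\cup\{|z-R|<\eta\}$ for some $\eta>0$. First I would re-expand $f$ as a Taylor series about an interior real base point and show that this series converges at a real point \emph{strictly beyond} $R$, which contradicts that $R$ is the radius of convergence of $\sum_n f_n z^n$.

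Fix any real $r$ with $0<r<R$. Since $f$ is holomorphic on $U$, its Taylor series about $r$ converges on the largest open disc centred at $r$ contained in $U$, so its radius $\rho_r$ is at least $\mathrm{dist}\big(r,\,\mathbb{C}\setminus U\big)$, where $\mathbb{C}\setminus U=\{|z|\ge R\}\cap\{|z-R|\ge\eta\}$. The key geometric step is to verify $\rho_r>R-r$: writing a competitor point as $z=\rho e^{i\theta}$ with $\rho\ge R$, one has $|z-r|^2=\rho^2-2\rho r\cos\theta+r^2$, which for fixed $\rho$ is increasing in $|\theta|$ and attains the value $(R-r)^2$ only in the excluded limit $\rho\to R,\ \theta\to 0$. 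Removing the disc $\{|z-R|<\eta\}$ therefore keeps every admissible $z$ strictly farther than $R-r$ from $r$, so $\rho_r>R-r$ and the Taylor expansion about $r$ converges at the real point $z^\ast=R+\varepsilon$ for some small $\varepsilon>0$.

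The decisive step is positivity. Because every $f_n\ge 0$ and $r>0$, each derivative $f^{(k)}(r)=\sum_{n\ge k}\frac{n!}{(n-k)!}\,f_n\,r^{n-k}$ is non-negative. Evaluating the convergent Taylor expansion at $z^\ast>r$ thus produces a double series of non-negative terms, and Tonelli's theorem licenses a free interchange of summation:
\be
\begin{split}
f(z^\ast)
&= \sum_{k\ge 0}\frac{(z^\ast-r)^k}{k!}\, f^{(k)}(r)
= \sum_{n\ge 0} f_n \sum_{k=0}^{n}\binom{n}{k}\, r^{n-k}(z^\ast-r)^k \\
&= \sum_{n\ge 0} f_n\, (z^\ast)^n \,.
\end{split}
\ee
Hence $\sum_n f_n (z^\ast)^n$ converges to the finite value $f(z^\ast)$ at a real point $z^\ast>R$, contradicting the definition of $R$ as the radius of convergence. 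Therefore $f$ cannot be analytic at $z=R$, i.e. $z=R$ is a singularity.

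I expect the main obstacle to be the middle paragraph: showing that excising only the small disc $\{|z-R|<\eta\}$ around the suspected singularity suffices to push the nearest-singularity distance strictly past $R-r$. This elementary inequality is exactly where the hypothesis ``analytic at $R$'' is consumed, and some care is needed to argue it uniformly over \emph{all} admissible competitor points rather than merely those lying on the circle $|z|=R$. By contrast, the positivity interchange in the final step is routine once non-negativity of every term has been secured.
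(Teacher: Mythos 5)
The paper itself gives no proof of this statement: it is quoted as a known result in the appendix, with the reader referred to Flajolet and Sedgewick \cite{FS_2009} for proofs. Your argument is correct and is essentially the classical proof found there (Theorem IV.6 of \cite{FS_2009}): assume analyticity at $z=R$, re-expand about an interior point $r\in(0,R)$, use non-negativity of the coefficients so that the double series at $z^\ast=R+\varepsilon$ has only non-negative terms, and rearrange by Tonelli to contradict the definition of $R$; the geometric step you flag as delicate is indeed sound, since the distance from $r$ to the closed set $\{|z|\ge R\}\cap\{|z-R|\ge\eta\}$ is attained, and every point of that set lies at distance strictly greater than $R-r$ from $r$.
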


If a generating function $f(z)$ has its dominant singularity
at $z= \zeta$, the rescaled function $f(\zeta z)$
is analytic within the disc $|z| < 1$.
Then, one may apply the following theorem
to obtain the most dominant contribution
of the asymptotic.

\begin{defn}
Given two numbers $\phi$, $R$
with $R>1$ and $0 < \phi < \frac{\pi}2$,
the open domain $\Delta_1(\phi,R)$ is defined as
(Fig. \ref{fig:domain})
\be
\Delta_1(\phi,R)=\,\big\{z \,\big|\, |z| < R \,, 
~ z \neq 1 \,, ~ |\mathrm{arg}(z-1)| >\phi \big\} \,.
\ee
A function is $\Delta_1$-analytic if it is
analytic in $\Delta_1(\phi,R)$ for some $\phi$ and $R$.
\end{defn}

\begin{thm}
{\rm (Flajolet and Odlyzko).}
Assume that $f(z)$ is $\Delta_1$-analytic and
\be
f(z) \sim (1-z)^{-\a}  ~~~~ as ~ z \to 1\,, ~~z \in \Delta_1
\ee
with $\a \notin \{0,-1,-2, \cdots\}$.
Then, the coefficients of $f$ satisfy
\be
[z^n] f(z) \sim \frac{n^{\a-1}}{\Gamma(\a)} \,.
\ee
\label{thmFO}
\end{thm}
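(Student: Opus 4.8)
The plan is to establish this transfer theorem by the method of singularity analysis, whose two ingredients are an exact computation on the basic scale $(1-z)^{-\alpha}$ together with a contour estimate that transfers the error. First I would record the coefficients of the standard function itself. By Newton's generalized binomial theorem,
\be
[z^n](1-z)^{-\alpha}=\binom{n+\alpha-1}{n}
=\frac{\Gamma(n+\alpha)}{\Gamma(\alpha)\,\Gamma(n+1)}\,,
\ee
where the hypothesis $\alpha\notin\{0,-1,-2,\dots\}$ is exactly what keeps $\Gamma(\alpha)$ finite and nonzero. Applying Stirling's formula to the ratio $\Gamma(n+\alpha)/\Gamma(n+1)\sim n^{\alpha-1}$ then gives $[z^n](1-z)^{-\alpha}\sim n^{\alpha-1}/\Gamma(\alpha)$, the claimed leading term.

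Next I would reduce the general statement to an error transfer. Set $g(z):=f(z)-(1-z)^{-\alpha}$; the hypothesis $f(z)\sim(1-z)^{-\alpha}$ means precisely that $g(z)=o\big((1-z)^{-\alpha}\big)$ as $z\to 1$ in $\Delta_1$, and $g$ is again $\Delta_1$-analytic. The heart of the matter is then the transfer lemma: if $g$ is $\Delta_1$-analytic with $g(z)=o\big((1-z)^{-\alpha}\big)$, then $[z^n]g(z)=o(n^{\alpha-1})$. Granting this, the theorem follows by adding the main term from the previous step to this negligible error.

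To prove the transfer lemma I would start from Cauchy's formula
\be
[z^n]g(z)=\frac1{2\pi i}\oint \frac{g(z)}{z^{n+1}}\,dz
\ee
and deform the contour --- this is exactly where $\Delta_1$-analyticity is indispensable --- into a Hankel-type path that passes to the far side of the singularity at $z=1$. The path consists of a small circle of radius $1/n$ about $z=1$, two rectilinear segments leaving that circle at angles $\pm\phi$, and a closing arc on $|z|=R'$ for some $1<R'<R$, all of which lies inside $\Delta_1$ for suitable $\phi$ and $R'$. On the outer arc $|z^{-n-1}|=O(R'^{-n})$ is exponentially small and so negligible at order $n^{\alpha-1}$. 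On the inner circle $|1-z|\asymp 1/n$, so the little-$o$ hypothesis gives $|g(z)|=o(n^{\alpha})$ while the arc length is $O(1/n)$ and $|z^{-n-1}|=O(1)$, for a total of $o(n^{\alpha-1})$. On the rectilinear parts, parametrizing $z=1+(t/n)e^{\pm i\phi}$ and using a bound of the form $|z|^{-n}\le e^{-c\,t}$ with $c=c(\phi)>0$ reduces the contribution to $n^{\alpha-1}$ times a convergent integral $\int t^{-\alpha}e^{-ct}\,dt$ (for real $\alpha$; the real part of $\alpha$ enters the exponents in the general case), again carrying the $o$-factor and hence $o(n^{\alpha-1})$.

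The step I expect to be the main obstacle is making the little-$o$ uniform along the contour, since the hypothesis is only a pointwise statement as $z\to 1$. To handle this I would split each rectilinear segment into an inner portion near $z=1$, of length tending to $0$, where the $o$-bound applies directly, and an outer portion, where only an $O\big((1-z)^{-\alpha}\big)$ bound is available but the exponential factor $e^{-ct}$ suppresses the tail; balancing these two regimes, and checking that the whole contour fits inside $\Delta_1$ with constants independent of $n$, is the delicate part. Once these estimates are assembled the pieces combine to give $[z^n]g(z)=o(n^{\alpha-1})$, and together with the exact main term this yields the stated asymptotic. For the complete and rigorous treatment I would follow Flajolet and Odlyzko \cite{FO_1990} and Flajolet and Sedgewick \cite{FS_2009}.
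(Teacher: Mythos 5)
The paper offers no proof of this statement: it is quoted from the literature, with the reader referred to \cite{FO_1990} and \cite{FS_2009} (where it is Theorem VI.4, resting on the $o$-transfer Theorem VI.3). Your sketch --- the exact main-term coefficients $[z^n](1-z)^{-\alpha}=\binom{n+\alpha-1}{n}\sim n^{\alpha-1}/\Gamma(\alpha)$ via Newton's binomial series, the reduction to transferring $g=f-(1-z)^{-\alpha}=o\bigl((1-z)^{-\alpha}\bigr)$, and the Hankel-type contour estimate with the inner/outer splitting of the rectilinear segments to make the little-$o$ uniform --- is precisely the standard proof from those references, and it is correct in outline.
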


\begin{figure}[h]
\centering
\includegraphics[width=0.7\textwidth]{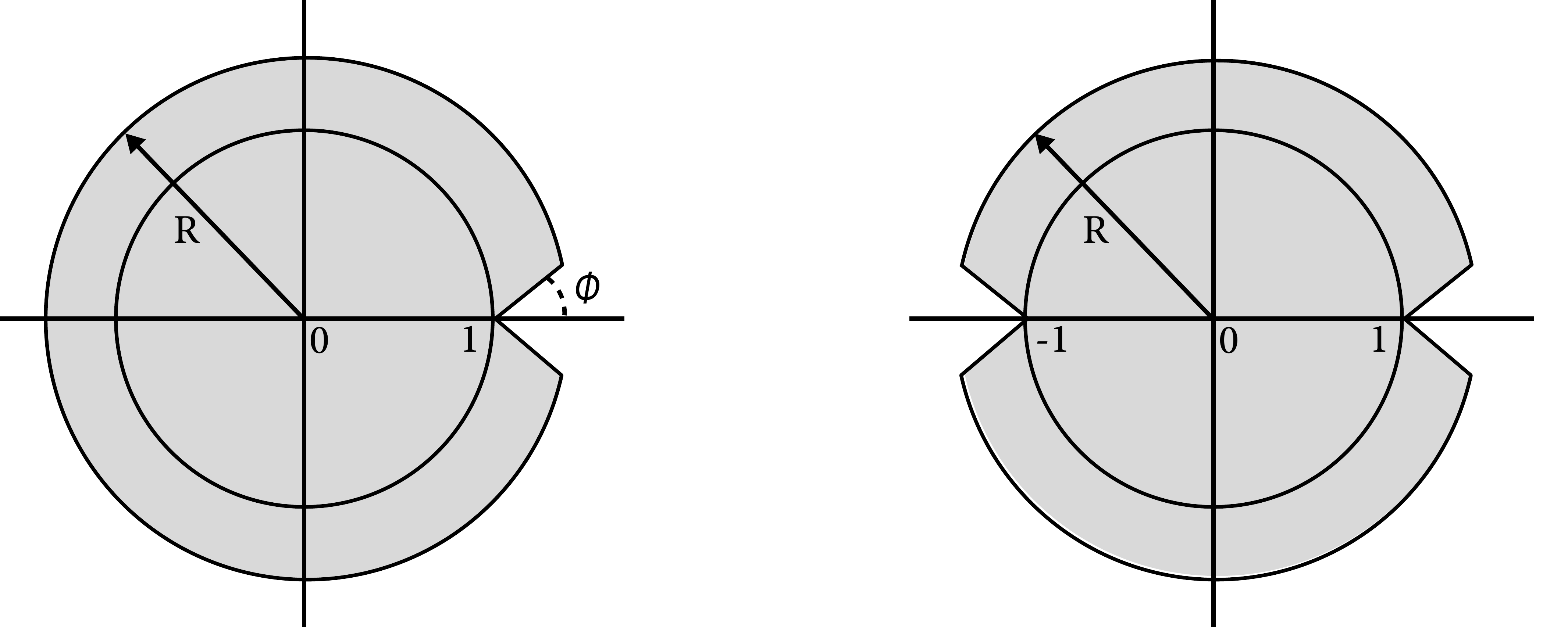}
\caption{The shaded region is $\Delta_1$ domain (left).
When there are two dominant singularities
at $z=\pm 1$, the generating function is required
to be analytic in the domain on the right.}
\label{fig:domain}
\end{figure}

\subsection{Proof of Theorem \ref{thm:mot}}
\label{app:2-1}

We need to find the asymptotics
of $M_n$, $M(r_0;n)$ and $\sum_{r_0} r_0 M(r_0;n)$.
Among the three,
the asymptotic of $M_n$ is already well-known:
$M_n \sim {3^{n+\frac32}}/({2 \sqrt{\pi} \, n^{3/2}})$.
We will not derive it here, nevertheless,
one may easily find it as well using the method given below.
Then, let us first consider $\sum_{r_0} r_0 M(r_0;n)$.
From \eqref{cMN}, one knows
\be
\sum_{r_0} r_0 M(r_0;n)
= [w^n]\, t \frac{\partial}{\partial t} m(t;1,w) \, \bigg|_{t=1}
= [w^n]\, w \, m(1,w)^2
\ee
where the generating function is explicitly given as
\be
w \, m(1,w)^2=-\frac{w^2+2w-1}{2w^3}
-\frac{(1-w)\sqrt{(1+w)(1-3w)}}{2w^3} \,.
\label{wm^2}
\ee

The former term in the right hand side of \eqref{wm^2}
is just to cancel singular terms in the expansion of $w$
and has no contributions in the coefficient of $w^n$ as $n \to \infty$.
Thus, we consider only the latter part in the singular expansion below.
Since the dominant singularity is at $\zeta := 1/3$,
by the rescaling $w \to \zeta w$,
the function $\zeta w \, m(1, \zeta w)^2$ is $\Delta_1$-analytic.
Around the point $1$, the expansion of the rescaled function
is given by
\be
-6 \sqrt3 \, (1-w)^{1/2}-\frac{81}4 \sqrt3 \, (1-w)^{3/2}
+\cO\big((1-w)^{5/2}\big) \,.
\ee
Then Theorem \ref{thmFO} applies to yield\footnote{
In order to determine the next order contribution,
one should take account of the subdominant
contribution coming from the term of $(1-w)^{1/2}$ 
as well as the dominant one from the term of $(1-w)^{3/2}$ .
Refer to Theorem VI.1 on page 381 of \cite{FS_2009}
}
\be
[w^n] \zeta w \, m(1, \zeta w)^2 
\sim -6 \sqrt3  \, \frac{n^{-3/2}}{\Gamma(-\frac12)}
+\cO(n^{-5/2}) \,.
\ee
Noting that 
$[w^n] w\,m(1,w)^2 = \zeta^{-n} [w^n] \zeta w \, m(1, \zeta w)^2$,
one obtains the dominant contribution in the asymptotic expression as
\be
\sum_{r_0} r_0 M(r_0;n) = [w^n] w \, m(1,w)^2
\sim \frac{3^{n+\frac32}}{\sqrt{\pi} \, n^{3/2}} \,.
\label{1asym1}
\ee
Therefore, with the asymptotic of Motzkin number,
we prove \eqref{mot2} in Theorem \ref{thm:mot}
that the expected number of $r_0$ is 2 at large $n$ limit.
In brief, 
we summarize our procedure
to find the asymptotics as follows:
given a generating function $f(z)$
with its dominant singularity being at $z=\zeta$,
if $f(\zeta z)$ is $\Delta_1$-analytic
and is written as $f(\zeta z)= Q(\zeta z)(1-z)^{-\a}$
with $\a \notin \{0,-1,\cdots \}$,
then $[z^n] f(z) \sim \zeta^{-n} Q(\zeta) \frac{n^{\a-1}}{\Gamma(\a)}$.

Let us now apply the procedure
to demonstrate the asymptotic distribution
\eqref{mot1} in Theorem \ref{thm:mot}.
From \eqref{cMN}, one knows
\be
M(r_0;n) = [w^n]\, w^{r_0} \, A(1,w)^{r_0+1} \,.
\ee
We evaluate the asymptotic 
of the coefficient of $w^n$ as $n \to \infty$ for some fixed $r_0$,
and thus we assume $r_0 \ll n$.
Putting $r_0 = k-1$ for notational simplicity,
the function is
explicitly written as
\be
w^{k-1} \, A(1,w)^{k}
=A_f^k (w)+ A_s^k (w)
\ee
where
\be
A_f^k(w)=\frac1{2^k w} \sum_{b=0}^{\lfloor \frac{k}2 \rfloor}
\binom{k}{2b} \left(\frac{1-3w}{1+w} \right)^b
 \,, ~~~ A_s^k(w) = -\frac1{2^k w} 
 \sum_{b=0}^{\lfloor \frac{k-1}2 \rfloor}
\binom{k}{2b+1} \left(\frac{1-3w}{1+w} \right)^{b+\frac12} \,.
\ee
Since $w \, A_f^k(w)$ is analytic at $0$
with the dominant singularity at $w= -1$,
by Theorem \ref{expgrowth},
the coefficient of $w^n$ in $A_f^k(w)$ 
is of exponential order $(-1)^{-n+1}$
whereas
the one in $A_s^k(w)$ 
is of exponential order $3^n$.
Therefore, for $k \ll n$,
the contributions of $A_f^k(w)$ 
in the coefficients of $w^n$ is negligible.
Furthermore, regarding Theorem \ref{thmFO},
one may anticipate that the most dominant 
contribution in $A_s^k(w)$ 
comes from the term of $b=0$.
Thus,
as far as the most dominant asymptotic is only concerned,
we can simply apply the procedure given above to the function,
\be
f(w):=-\frac{k}{2^k}\sqrt{\frac1{1+w}} \, (1-3w)^{1/2} \,.
\ee
Observing that $[w^{n+1}]f(w) \sim [w^n] A_s^k(w)$,
one arrives at
\be
[w^n]\,w^{k-1} A(1,w)^k  \sim 3^{n+1} 
\bigg(-\frac{k}{2^k} \sqrt{\frac{3}{4}} \, \, \bigg) \,
\frac{n^{-3/2}}{\Gamma(-\frac12)}
\ee
and therefore,
\be
M(r_0;n) \sim  
\frac{(r_0+1) \, 3^{n+\frac32}}{2^{r_0+3} \sqrt\pi \, n^{3/2}} \,.
\label{1asym2}
\ee
The asymptotics in \eqref{1asym1} and \eqref{1asym2}
together with $M_n \sim {3^{n+\frac32}}/({2 \sqrt{\pi} \, n^{3/2}})$,
one obtains the formulae in Theorem \ref{thm:mot}.

\subsection{Proof of Theorem \ref{thm:com}} 
\label{app:2-2}
One can basically follow the procedure given above
to find the asymptotics in Theorem \ref{thm:com},
although there is one more thing to consider:
multiple dominant singularities.
Let us first consider $\pi_\la(\nu)$.
From the equation \eqref{GFpi},
\be
\pi_\la(\nu) = [z^\nu] \, \frac{z^{\la+1}}{1-z}\, m(z^{1-\la},z^{\la+1}) 
\ee
where
\be
\frac{z^{\la+1}}{1-z}\, m(z^{1-\la},z^{\la+1})
=\frac{1-z^{\la+1}-\sqrt{z^{2\la+2}-4z^{\la+3}-2z^{\la+1}+1}}
{2z^2(1-z)} \,.
\ee
We write the polynomial in the square root as
\be
z^{2\la+2}-4z^{\la+3}-2z^{\la+1}+1 = Q_{\la}(z) \, (1-z/\zeta_\la)
\ee
where $\zeta_\la$ is the dominant singularity.
The fact that $\zeta_\la$ is a positive real number
is guaranteed by Pringsheim's theorem (Theorem \ref{Pring}).
Following the procedure in the proof of Theorem \ref{thm:mot},
one may simply obtain
\be
\pi_\la(\nu) \sim \left(\frac1{\zeta_\la}\right)^{\nu+2} 
\left(-\frac{\sqrt{Q_\la(\zeta_\la)}}{2(1-\zeta_\la)}\right) \,
\frac{\nu^{-3/2}}{\Gamma(-\frac12)} \,.
\label{2asym1}
\ee
Note that, however, this formula holds for even numbers of $\la$.

When $\la$ is an odd number,
we have two dominant singularities,
which are $+\zeta_\la$ and $-\zeta_\la$.
The generating function is then analytic 
in the domain depicted in Figure \ref{fig:domain} after the rescaling.
In this case,
the two contributions 
from each of the singularities
are added up 
to give the asymptotic
(for rigorous and complete arguments,
refer to Theorem VI.5 on page 398 of \cite{FS_2009}).
The presence of 
the two dominant singularities
reflects the fact
that the coefficient 
of $z^\nu$ with the odd $\nu=2k+1$ 
equals to the one with the even $\nu=2k$:
An odd number of $\la$ means
assigning an even number of vertices in each hairpin
and hence,
there is no compatible $\pi$-shape with $\nu$ odd.
This feature can be shown from the explicit calculation.
Let us write the polynomial for $\la$ odd as
\be
z^{2\la+2}-4z^{\la+3}-2z^{\la+1}+1 
= R_{\la}(z) \, (1-z/\zeta_\la) \, (1+z/\zeta_\la) \,.
\ee
Then the two contributions are added up and give
\be
\pi_\la(\nu) \sim \left(\frac1{\zeta_\la}\right)^{\nu+2} 
\left(-\frac{\sqrt{2\, R_\la(\zeta_\la)}}{2(1-\zeta_\la)}\right) \,
\frac{\nu^{-3/2}}{\Gamma(-\frac12)}
+ \left(\frac1{-\zeta_\la}\right)^{\nu+2} 
\left(-\frac{\sqrt{2\, R_\la(\zeta_\la)}}{2(1+\zeta_\la)}\right) \,
\frac{\nu^{-3/2}}{\Gamma(-\frac12)}\,.
\label{2asym2}
\ee
For both the even $\nu=2k$ and the odd $\nu=2k+1$, one finds
\be
\pi_\la(2k)=\pi_\la(2k+1) \sim 
\left(\frac1{\zeta_\la}\right)^{2k+2} 
\left(-\frac{\sqrt{2\, R_\la(\zeta_\la)}}{(1-\zeta_\la)^2}\right) \,
\frac{(2k)^{-3/2}}{\Gamma(-\frac12)} \,.
\ee

Let us now consider the asymptotic of $\sum_{r_0} r_0 \pi_\la(r_0; \nu)$.
Its generating function is given as
\be
\sum_{r_0} r_0 \pi_\la(r_0; \nu) = 
[z^\nu]  \, 
\frac{z^{\la+1}}{1-z}\, 
\frac{\partial}{\partial t} \, m(t;z^{1-\la},z^{\la+1}) \bigg|_{t=1}
=[z^\nu] \, \frac{z^{2\la+2}}{1-z} \, 
\left( m(z^{1-\la},z^{\la+1}) \right)^2 \,.
\ee
The significant part of the generating function is
\be
-\frac{(1-z^{\la+1}) \sqrt{z^{2\la+2}-4z^{\la+3}-2z^{\la+1}+1 }}
{2z^4 \,(1-z)} \,.
\ee
It is now straightforward to find its asymptotic.
For an even number of $\la$,
\be
\sum_{r_0} r_0 \pi_\la(r_0; \nu) \sim
\left(\frac1{\zeta_\la}\right)^{\nu+4} 
\left(-\frac{(1-\zeta_\la^{\la+1}) \, \sqrt{Q_\la(\zeta_\la)}}
{2(1-\zeta_\la)}\right) \,
\frac{\nu^{-3/2}}{\Gamma(-\frac12)}
\label{2asym3}
\ee
and for an odd number of $\la$,
\be
\sum_{r_0} r_0 \pi_\la(r_0; \nu) \sim
\left(\frac1{\zeta_\la}\right)^{\nu+4} 
\left( \frac1{1-\zeta_\la}+\frac{(-1)^\nu}{1+\zeta_\la} \right)
\left(-\frac{(1-\zeta_\la^{\la+1}) \, \sqrt{2 R_\la(\zeta_\la)}}
{2}\right) \,
\frac{\nu^{-3/2}}{\Gamma(-\frac12)} \,.
\label{2asym4}
\ee

In the same manner, 
one may easily find the asymptotic of $\pi_\la(r_0; \nu)$.
From \eqref{GFpi} and \eqref{cMN}, we know
\be
\pi_\la(r_0; \nu)=[z^\nu] \,
\frac1{1-z} \left( z^{\la+1} \, A(z^{1-\la},z^{\la+1}) \right)^{r_0+1} \,.
\ee
The most dominant contribution comes from the term,
\be
-\frac{(r_0+1)(1+z^{\la+1})^{r_0} 
\sqrt{z^{2\la+2}-4z^{\la+3}-2z^{\la+1}+1}}
{2^{r_0+1} (1-z)(1+z^2)^{r_0+1}} \,.
\ee
Therefore, for an even number of $\la$,
\be
\pi_\la(r_0; \nu) \sim
\left(\frac1{\zeta_\la}\right)^{\nu} 
\left(-\frac{(r_0+1)(1+\zeta_\la^{\la+1})^{r_0} \, \sqrt{Q_\la(\zeta_\la)}}
{2^{r_0+1}(1-\zeta_\la)(1+\zeta_\la^2)^{r_0+1}}\right) \,
\frac{\nu^{-3/2}}{\Gamma(-\frac12)}
\label{2asym5}
\ee
and for an odd number of $\la$,
\be
\pi_\la(r_0; \nu) \sim
\left(\frac1{\zeta_\la}\right)^{\nu} 
\left( \frac1{1-\zeta_\la}+\frac{(-1)^\nu}{1+\zeta_\la} \right)
\left(-\frac{(r_0+1)(1+\zeta_\la^{\la+1})^{r_0} \, \sqrt{2R_\la(\zeta_\la)}}
{2^{r_0+1}(1+\zeta_\la^2)^{r_0+1}}\right) \,
\frac{\nu^{-3/2}}{\Gamma(-\frac12)} \,.
\label{2asym6}
\ee

The asymptotics given in \eqref{2asym1}, \eqref{2asym2}, \eqref{2asym3},
\eqref{2asym4}, \eqref{2asym5} and \eqref{2asym6}
prove Theorem \ref{thm:com}.

\end{appendix}



\begin{thebibliography}{10}

\bibitem{NS_1980}
R.~Nussinov and A.~B. Jacobson, ``{Fast algorithm for predicting the secondary
  structure of single-stranded RNA.},'' {\em Proceedings of the National
  Academy of Sciences of the United States of America}, vol.~77, no.~11,
  pp.~6309--13, 1980.

\bibitem{ZS_1981}
M.~Zuker and P.~Stiegler, ``{Optimal computer folding of large RNA sequences
  using thermodynamics and auxiliary information},'' {\em Nucleic Acids
  Research}, vol.~9, no.~1, pp.~133--148, 1981.

\bibitem{ZS_1984}
M.~Zuker and D.~Sankoff, ``Rna secondary structures and their prediction,''
  {\em Bulletin of Mathematical Biology}, vol.~46, no.~4, pp.~591 -- 621, 1984.

\bibitem{SSR_1997}
P.~Schuster, P.~F. Stadler, and A.~Renner, ``Rna structures and folding: from
  conventional to new issues in structure predictions,'' {\em Current Opinion
  in Structural Biology}, vol.~7, no.~2, pp.~229 -- 235, 1997.

\bibitem{SW_1979}
P.~Stein and M.~Waterman, ``On some new sequences generalizing the catalan and
  motzkin numbers,'' {\em Discrete Mathematics}, vol.~26, no.~3, pp.~261 --
  272, 1979.

\bibitem{HSS_1998}
I.~L. Hofacker, P.~Schuster, and P.~F. Stadler, ``{Combinatorics of RNA
  secondary structures},'' {\em Discrete Applied Mathematics}, vol.~88,
  no.~1-3, pp.~207--237, 1998.

\bibitem{SW_1994}
W.~R. Schmitt and M.~S. Waterman, ``Linear trees and rna secondary structure,''
  {\em Discrete Applied Mathematics}, vol.~51, no.~3, pp.~317 -- 323, 1994.

\bibitem{BLR_2016}
C.~L. Barrett, T.~J. Li, and C.~M. Reidys, ``{RNA Secondary Structures Having a
  Compatible Sequence of Certain Nucleotide Ratios},'' {\em Journal of
  Computational Biology}, vol.~23, pp.~857--873, 2016.

\bibitem{CRU}
S.~K. Choi, C.~Rim, and H.~Um, ``{RNA substructure as a random matrix
  ensemble},'' 2016.

\bibitem{W_1978}
M.~S. Waterman, ``Secondary structure of single-stranded nucleic acids,'' {\em
  Adv. math. suppl. studies}, vol.~1, pp.~167--212, 1978.

\bibitem{GVR_2004}
R.~Giegerich, B.~Voss, and M.~Rehmsmeier, ``Abstract shapes of rna,'' {\em
  Nucleic Acids Research}, vol.~32, no.~16, pp.~4843--4851, 2004.

\bibitem{OZ_2002}
H.~Orland and A.~Zee, ``{RNA folding and large n matrix theory},'' {\em Nucl.
  Phys.}, vol.~B620, pp.~456--476, 2002.

\bibitem{RHAPSN_2011}
C.~M. Reidys, F.~W.~D. Huang, J.~E. Andersen, R.~C. Penner, P.~F. Stadler, and
  M.~E. Nebel, ``{Topology and prediction of RNA pseudoknots},'' {\em
  Bioinformatics}, vol.~27, pp.~1076--1085, apr 2011.

\bibitem{JRG_2008}
S.~Janssen, J.~Reeder, and R.~Giegerich, ``Shape based indexing for faster
  search of rna family databases,'' {\em BMC Bioinformatics}, vol.~9, p.~131,
  Feb 2008.

\bibitem{NS_2009}
M.~E. Nebel and A.~Scheid, ``{On quantitative effects of RNA shape
  abstraction},'' {\em Theory in Biosciences}, vol.~128, no.~4, pp.~211--225,
  2009.

\bibitem{Coker}
C.~Coker, ``Enumerating a class of lattice paths,'' {\em Discrete Mathematics},
  vol.~271, no.~1, pp.~13 -- 28, 2003.

\bibitem{CYY_2008}
W.~Y. Chen, S.~H. Yan, and L.~L. Yang, ``Identities from weighted motzkin
  paths,'' {\em Advances in Applied Mathematics}, vol.~41, no.~3, pp.~329 --
  334, 2008.

\bibitem{DS_1977}
R.~Donaghey and L.~W. Shapiro, ``Motzkin numbers,'' {\em Journal of
  Combinatorial Theory, Series A}, vol.~23, no.~3, pp.~291 -- 301, 1977.

\bibitem{LPC_2008}
W.~A. Lorenz, Y.~Ponty, and P.~Clote, ``Asymptotics of rna shapes,'' {\em
  Journal of Computational Biology}, vol.~15, no.~1, pp.~31--63, 2008.

\bibitem{CRU_2019}
S.~K. Choi, C.~Rim, and H.~Um, ``Narayana number, chebyshev polynomial and
  motzkin path on rna abstract shapes,'' in {\em 2017 MATRIX Annals} (D.~R.
  Wood, J.~de~Gier, C.~E. Praeger, and T.~Tao, eds.), pp.~153--156, Springer,
  2019.

\bibitem{BH_2011}
P.~Barry and A.~Hennessy, ``{A note on narayana triangles and related
  polynomials, Riordan arrays, and MIMO capacity calculations},'' {\em Journal
  of Integer Sequences}, vol.~14, no.~3, pp.~1--26, 2011.

\bibitem{Touchard}
J.~Touchard, ``Sur certaines $\acute{e}$quations fonctionnelles,'' {\em
  Proceedings of the International Congress on Mathematics, Toronto (1924)},
  vol.~1, pp.~465 -- 472, 1928.

\bibitem{DV_1984}
M.-P. Delest and G.~Viennot, ``Algebraic languages and polyominoes
  enumeration,'' {\em Theoretical Computer Science}, vol.~34, no.~1, pp.~169 --
  206, 1984.

\bibitem{C_1887}
M.~E. Catalan, ``Sur les nombres de segner,'' {\em Rendiconti del Circolo
  Matematico di Palermo (1884-1940)}, vol.~1, pp.~190--201, Dec 1887.

\bibitem{L_2000}
P.~Larcombe, ``A forgotten convolution type identity of catalan,'' {\em
  Utilitas Mathematica}, vol.~57, 05 2000.

\bibitem{FS_2009}
P.~Flajolet and R.~Sedgewick, {\em Analytic Combinatorics}.
\newblock New York, NY, USA: Cambridge University Press, 1~ed., 2009.

\bibitem{FO_1990}
P.~Flajolet and A.~Odlyzko, ``Singularity analysis of generating functions,''
  {\em SIAM Journal on Discrete Mathematics}, vol.~3, no.~2, pp.~216--240,
  1990.

\end{thebibliography}
\end{document}